\newtheorem{thm}{Theorem}[section]
\newtheorem{cor}[thm]{Corollary}
\newtheorem{lem}[thm]{Lemma}
\newtheorem{definition}[thm]{Definition}
\newtheorem{example}[thm]{Example}
\newtheorem{remark}[thm]{Remark}
\newtheorem{proposition}[thm]{Proposition}
\theoremstyle{definition}
\keywords{free semigroup actions; topological entropy; pseudo-entropy; chain transitive; the chain recurrence time; the chain mixing time}
\subjclass[2010]{Primary: 37B40, 37B05; Secondary: 37B20.}
\begin{document}
\title[Chain and topoligical entropy for free semigroup actions]{Chain recurrence rates and topological entropy for free semigroup actions}
\author[Yanjie Tang, Xiaojiang Ye and Dongkui Ma]{}

\email{yjtang1994@gmail.com}
\email{yexiaojiang12@163.com}
\email{dkma@scut.edu.cn}

\date{\today}
\thanks{{$^{*}$}Corresponding author: Dongkui Ma}

\maketitle
\centerline{\scshape Yanjie Tang, Xiaojiang Ye and Dongkui Ma $^*$}
\medskip
{\footnotesize
	\centerline{School of Mathematics, South China University of Technology, }
	\centerline{Guangzhou 510641, P.R. China}
} 

\hspace{2mm}

\begin{abstract}
Misiurewicz \cite{MM} introduced the concept of pseudo-entropy and proved this quantity coincides with topological entropy.  Richeson et al. \cite{RD} obtained the lower bounded of topological entropy by means of the definition of pseudo-entropy. This paper aims to generalize the main results obtained by Misiurewicz and Richeson et al. to free semigroup actions. Firstly, the pseudo-entropy is introduced for free semigroup actions, and it is shown that the pseudo-entropy coincides with the topological entropy defined by Bufetov \cite{BAA}. Secondly, these concepts of the chain recurrence, the chain mixing, the chain recurrence time and the chain mixing time for free semigroup actions are introduced, and the upper bounds of these recurrence times are given. Forthermore, a lower bound of topological entropy is given by the lower box dimension and the chain mixing time using the definition of pseudo-entropy for free semigroup actions. Thirdly, the structure of chain transitive systems for free semigroup actions is discussed.
\end{abstract}


\section{Introduction}

\hspace{4mm}
Topological entropy was first introduced by Adler et al. \cite{AKK}. Bowen \cite{BR} and Dinaburg \cite{ADV} extended the topological entropy to a uniformly continuous map on metric space and proved that it coincides with that defined by Adler et al. for a compace metric space. The topological entropy turned out to be a surprisingly universal concept in dynamical systems since it appears in the study of different subjects such as fractal, Poincar\'{e} recurrence, and in the analysis of either local or global complexities. The dynamical systems for free semigroup actions is the natural extention of classical topological dynamical systems. Ghys et al. \cite{MR926526} proposed a definition of topological entropy for finitely generated pseudo-groups of continuous maps. Bi\'{s} \cite{BAC} and Bufetov \cite{BAA} respectively defined the topological entropy for free semigroup actions from different angles. Many remarkable results have been obtained \cite{JM,LM,CMR,MR3592853,MR3784991,MR926526}.

\hspace{4mm}
Pseudo-orbits, or chains, have always been  one of the significant tools for studying the properties of topological dynamical systems. In recent years, a large number of scholars have used pseudo-orbits or chains as tools to study topological entropy, and have obtained some excellent results \cite{MM,BM,RD,MR1336706,MR3539733}.  A remarkable result by Misiurewicz \cite{MM} stated that the topological entropy can be calculated by the exponential growth rate of the number of pseudo-orbits. Barge and Swanson \cite{BM} further found that relpace pseudo-orbits with periodic pseudo-orbits, and the result proved by Misiurewicz \cite{MM} was still valid. In \cite{MR1336706}, Hurley considered pseudo-orbits for inverse images and showed that the point entropy of pseudo-orbits is in fact equal to the topological entropy. Taking the topological entropy defined Misiurewicz \cite{MM} as a bridge, Richeson and Wiseman \cite{RD} related the chain mixing time and the lower box dimension to topological entropy, and obtained a lower bound of topological entropy. It is so interesing! But the above results for topological entropy's estimation focus on a single map. A natural question emerges here, whether we can give a lower bound of topological entropy for free semigroup actions.   To answer this question, we introduce the notions of pseudo-entropy for free semigroup actions, and the Sect. \ref{TCF} and Sect. \ref{entropy}  in this paper give an affirmative answer to this question.

\hspace{4mm}
Akin \cite{MR1219737} (Exercise 8.22 and 9.18) initially discussed the structure of the chain transitive maps, a map of chain transitive but not chain mixing factors a cyclic permutation on a finite set with at least two elements. Richeson and Wiseman \cite{RD} enriched the result of Akin \cite{MR1219737} and filled in the gaps in the proofs sketched. They obtained that if $f$ is a chain transitive map on a compact metric space either then there is a period $k\geq 1$ such that $f$ cyclically permutes $k$ closed and open equivalence classes and $f^k$ restricted to each equivalence class is chain mixing; or $f$ factors onto an adding machine map. The above results for structure of chain transitive systems focus on a single map. Naturally, we wonder if the result of Richeson and Wiseman \cite{RD} remains valid in case of free semigroup actions. 

\hspace{4mm}
Let $X$ be a compact metric space with metric $d$, and $G$ be the free semigroup acting on the space $X$ generated by $\{f_0,\cdots,f_{m-1}\}$. In the following theorem, denote by $h(G)$ and $h^*(G)$ the topological entropy and the pseudo-entropy for the free semigroup action $G$, respectively (see Sect. \ref{TCF}). Let $r_\varepsilon(G)$ and $m_\varepsilon(\delta,G)$ be the chain recurrence time and the chain mixing time of the free semigroup action $G$, more precisely in \ref{entropy}. 

\hspace{4mm}
Now we start to state our main theorem.
\begin{thm}
	\label{CE}
	The topogogical entropy $h(G)$ coincides with the pseudo-entropy $h^*(G)$ for free semigroup actions.
\end{thm}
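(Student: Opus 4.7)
The plan is to prove the two inequalities $h(G)\le h^*(G)$ and $h^*(G)\le h(G)$ separately, adapting Misiurewicz's block-approximation argument to the free semigroup setting by indexing all estimates over finite words $w=i_0\cdots i_{n-1}$ in the alphabet $\{0,\ldots,m-1\}$ and averaging over the $m^n$ words of length $n$ as in Bufetov's definition. The easy direction $h(G)\le h^*(G)$ would come from the observation that along any word $w$ the true orbit $(x,f_{i_0}x,f_{i_1}f_{i_0}x,\ldots)$ is a $0$-pseudo-orbit, hence an $\varepsilon$-pseudo-orbit for every $\varepsilon>0$; each $(w,\varepsilon)$-separated subset of $X$ therefore injects into the $(w,\varepsilon)$-separated set of $\varepsilon$-pseudo-orbits along $w$, giving the word-wise bound $s(w,\varepsilon)\le s^*(w,\varepsilon)$. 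Averaging over $w$, taking $\limsup$ in $n$, and letting $\varepsilon\to 0$ would conclude this direction.

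For the reverse inequality I would fix $\varepsilon>0$ and use the uniform continuity of the finitely many generators on the compact space $X$ to pick, by backward induction, $0<\varepsilon_N<\cdots<\varepsilon_1<\varepsilon_0=\varepsilon$ such that $d(x,y)<\varepsilon_{k+1}$ implies $d(f_i(x),f_i(y))<\varepsilon_k/2$ uniformly in $i$. For any $\delta<\varepsilon_N$ and any word $w$ of length $n$, decompose $w$ into $\lceil n/N\rceil$ consecutive sub-words of length $N$. Within each block a routine induction shows that the true orbit starting at the block's first point stays within $\varepsilon/2$ of the given $\delta$-pseudo-orbit $(x_0,\ldots,x_n)$, so any $(w,\varepsilon)$-separated family of $\delta$-pseudo-orbits along $w$ injects into a product, over the blocks, of $(\varepsilon/2)$-spanning sets for the corresponding sub-words. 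Averaging over all $w$ of length $n$, letting first $N\to\infty$ and then $\varepsilon\to 0$, should push the resulting upper bound for $h^*(G)$ down to $\tfrac{1}{N}\log\bigl(\tfrac{1}{m^N}\sum_{|w'|=N} r(w',\varepsilon/2)\bigr)$ in the limit, which is exactly $h(G)$.

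The step I expect to be the main obstacle is the combinatorial bookkeeping when passing from the per-word product estimate $s^*(w,\delta)\le\prod_j r(w_j,\varepsilon/2)$ to its average over all length-$n$ words. In the single-map case this is trivial multiplicativity of counts, but in the semigroup setting one must carefully verify the factorisation $\sum_{|w|=n}\prod_j r(w_j,\varepsilon/2)\le\bigl(\sum_{|w'|=N}r(w',\varepsilon/2)\bigr)^{\lceil n/N\rceil}$ and confirm that the $m^{-n}$ normalisation in Bufetov's definition cancels against the $m^{-N}$ factors block by block, with only a boundary remainder coming from the (at most length-$N$) tail. This is where the free semigroup structure interacts non-trivially with the otherwise routine counting, and it is the only place I anticipate needing real care; once this identity is in hand, everything else reduces to applying the standard Bowen inequality for each factor and letting the parameters tend to their limits in the prescribed order.
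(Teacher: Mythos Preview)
Your proposal is correct but proceeds by a genuinely different route from the paper. The paper does \emph{not} adapt Misiurewicz's block argument directly; instead it passes through the skew-product $F:\Sigma_m^+\times X\to\Sigma_m^+\times X$. It proves two counting lemmas showing $h^*(F)=\log m+h^*(G)$ (the lower bound by embedding word-indexed pseudo-separated sets into pseudo-separated sets for $F$, the upper bound by using the pseudo-orbit tracing property of the full shift $(\Sigma_m^+,\sigma)$ to build pseudo-spanning sets for $F$), and then simply quotes Misiurewicz's single-map theorem $h(F)=h^*(F)$ together with Bufetov's formula $h(F)=\log m+h(G)$ to conclude. Your argument, by contrast, redoes the Misiurewicz estimate inside the free semigroup and avoids the skew-product and the shadowing property entirely; this is more self-contained and arguably more elementary, at the cost of repeating rather than citing Misiurewicz.

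Two small comments on your sketch. First, the combinatorial step you single out as the main obstacle is in fact trivial: for $n=kN$ the map $w\mapsto(w_1,\ldots,w_k)$ is a bijection between words of length $n$ and $k$-tuples of words of length $N$, so $\sum_{|w|=n}\prod_{j}B(w_j,\varepsilon/2)=\bigl(\sum_{|w'|=N}B(w',\varepsilon/2)\bigr)^{k}$ is an exact equality, and the normalisations $m^{-n}=(m^{-N})^k$ match on the nose; the tail block of length $<N$ contributes only a bounded factor. Second, with the $\varepsilon/2$-tracking you describe, two pseudo-orbits assigned the same block data can only be guaranteed within $2\varepsilon$ of each other, so the clean inequality is $N^*(w,2\varepsilon,\delta,G)\le\prod_j B(w_j,\varepsilon/2,G)$ rather than with $\varepsilon$ on the left; this costs a harmless constant in $\varepsilon$ that vanishes in the final limit.
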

\begin{thm}\label{UBD}
	Let $\bar{b}$ be the upper box dimension of $X$. There exists a constant $C>0$ such that for small enough $\varepsilon>0$:
	\begin{enumerate}[(1)]
		\item if $G$ is chain transitive, then $r_\varepsilon(G)\leq  C/{\varepsilon^{\bar{b}+1}}$;
		\item if $G$ is chain mixing, then $m_\varepsilon(\delta,G)\leq C/{\varepsilon^{2(\bar{b}+1)}}$.
	\end{enumerate}
\end{thm}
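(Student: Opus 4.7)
The plan is to adapt Richeson and Wiseman's arguments for single maps \cite{RD} to the free-semigroup setting by performing all constructions along words in $F_m^+$.

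For part~(1), fix $\varepsilon>0$ small enough that the upper-box-dimension estimate gives an $\varepsilon/3$-net $P=\{p_1,\dots,p_N\}$ of size $N\le C_0\,\varepsilon^{-(\bar b+1)}$. Build a directed graph $\Gamma$ on $P$ with an edge $p_i\to p_j$ labelled by $k\in\{0,\dots,m-1\}$ whenever $d(f_k(p_i),p_j)<\varepsilon/2$. Using chain transitivity of $G$ together with the uniform continuity of the generators on the compact space $X$, one shows that $\Gamma$ is strongly connected: starting from any $\varepsilon'$-chain $p\to q$ for $G$ (with $\varepsilon'$ chosen by uniform continuity so that rounding is safe) one replaces each intermediate point by its nearest net point to obtain a directed walk in $\Gamma$. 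A shortest directed path from $p_i$ to $p_j$ in $\Gamma$ visits each vertex at most once, so reading off its labels yields an $\varepsilon/2$-chain, hence an $\varepsilon$-chain, for $G$ of length $\le N$. For arbitrary $x,y\in X$, pick net points within $\varepsilon/3$ of $x,y$ and prepend $x$ and append $y$ to such a chain between them to obtain an $\varepsilon$-chain from $x$ to $y$ of length $\le N+2\le C\,\varepsilon^{-(\bar b+1)}$.

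For part~(2), assume $G$ is chain mixing. Part~(1) already produces a point-to-point $\varepsilon$-chain of length at most $L:=C\,\varepsilon^{-(\bar b+1)}$. Fix a net point $p\in P$ and use chain mixing, combined with the same graph-theoretic argument applied to loops in $\Gamma$ based at $p$, to produce closed $\varepsilon$-chains at $p$ of two lengths $\ell_1,\ell_2\le L$ with $\gcd(\ell_1,\ell_2)=1$; this is the one step that genuinely requires chain mixing rather than mere chain transitivity. By the Frobenius/Sylvester theorem every integer $n\ge\ell_1\ell_2$ is a nonnegative integer combination of $\ell_1$ and $\ell_2$. Concatenating a point-to-point chain from part~(1) with an appropriate number of such loops then produces an $\varepsilon$-chain of length exactly $n$ between any two prescribed points whenever $n\ge L+\ell_1\ell_2\le C\,\varepsilon^{-2(\bar b+1)}$, which is the desired bound on $m_\varepsilon(\delta,G)$.

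The main technical obstacle is that every chain in the free-semigroup setting carries not only a sequence of points but also a word $\omega\in F_m^+$ labelling which generator is applied at each step, so that concatenation, loop-insertion, and shortcutting of chains must all be carried out consistently with these labels. In particular, pinning down the precise role of the auxiliary parameter $\delta$ in $m_\varepsilon(\delta,G)$, and arranging that the modulus of uniform continuity of each generator is absorbed into the constant $C$ rather than into the exponent of $\varepsilon$, are the steps at which the present argument departs substantively from the single-map template of \cite{RD}; the combinatorial gcd bookkeeping in part~(2) is then essentially formal.
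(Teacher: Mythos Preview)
Your route is genuinely different from the paper's. The paper does \emph{not} redo the Richeson--Wiseman net/graph argument along words in $F_m^+$; instead it passes to the skew product $F$ on $\Sigma_m^+\times X$, quotes from \cite{WX} that $F$ is chain transitive (resp.\ chain mixing) iff $G$ is, observes that any $\varepsilon$-chain for $F$ projects to a $(w,\varepsilon)$-chain for $G$ of the same length (so that $r_\varepsilon(G)\le r_\varepsilon(F)$ and $m_\varepsilon(\delta,G)\le m_\varepsilon(\delta,F)$), and then applies \cite{RD} as a black box to the single map $F$. The exponent $\bar b+1$ appears precisely because $\overline{\dim}_B(\Sigma_m^+\times X)\le \bar b+1$. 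Your direct argument on $X$ alone, if it goes through, would actually give the sharper exponents $\bar b$ and $2\bar b$; when you write $N\le C_0\,\varepsilon^{-(\bar b+1)}$ for an $\varepsilon/3$-net of $X$ you are discarding a factor of $\varepsilon$ for no reason. So the paper's proof is short and modular, while yours is more hands-on and potentially yields a stronger statement.

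That said, there is a real gap in your part~(2). You assert one can find two closed $\varepsilon$-chains at $p$ of lengths $\ell_1,\ell_2\le L$ with $\gcd(\ell_1,\ell_2)=1$, but chain mixing only guarantees that the gcd of \emph{all} cycle lengths in $\Gamma$ equals $1$; it does not hand you two short coprime cycles. Simple cycles have length $\le N$, yet their lengths could be, for instance, $\{6,10,15\}$, no two of which are coprime, so your Sylvester--Frobenius step does not apply as written. The correct fix is either to run the Frobenius argument with a full generating set of simple-cycle lengths and bound the resulting Frobenius number by $O(N^2)$, or, more cleanly, to invoke Wielandt's theorem that a primitive digraph on $N$ vertices has exponent at most $(N-1)^2+1$, which directly produces walks of every length $\ge(N-1)^2+1$ between any two vertices and hence the $O(N^2)$ bound on $m_\varepsilon(\delta,G)$.
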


\begin{thm}
	Let $(X,G)$ be chain mixing. Then the topological entropy $h(G)$ satisfies,
	$$
	h(G)\geq \max\left \{0,\,\,\underline{b}\cdot\limsup_{\delta\to 0}\dfrac{\log (1/\delta)}{\lim_{\varepsilon\to 0 }m_\varepsilon(\delta)}-\log m\right \},
	$$
	where $\underline{b}$ is the lower box dimension of $X$.
	\label{LBM}
\end{thm}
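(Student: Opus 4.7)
The plan is to combine Theorem \ref{CE}, which identifies $h(G)$ with the pseudo-entropy $h^*(G)$, with an explicit construction that uses the chain mixing hypothesis to produce exponentially many $(n,\delta)$-separated $\varepsilon$-pseudo-orbits. Since $h(G)\geq 0$ trivially, the content is the inequality
$$h^*(G)\;\geq\;\underline{b}\cdot\limsup_{\delta\to 0}\frac{\log(1/\delta)}{\lim_{\varepsilon\to 0}m_\varepsilon(\delta)}\;-\;\log m.$$

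Fix small $\varepsilon,\delta>0$ and set $M=m_\varepsilon(\delta,G)$. By the definition of the lower box dimension there is a sequence $\delta_j\to 0$ along which one can choose a $(3\delta_j)$-separated set $E_j\subset X$ with $\log|E_j|\geq(\underline{b}-o(1))\log(1/\delta_j)$. By chain mixing, for every ordered pair $(p,q)\in E_j\times E_j$ there is a word $\omega_{p,q}$ of length $M$ admitting an $\varepsilon$-pseudo-orbit that starts at $p$ and terminates within $\delta_j$ of $q$. For each $k\in\N$ and each tuple $\vec p=(p_0,\dots,p_k)\in E_j^{\,k+1}$, concatenating the pseudo-orbits between consecutive pairs yields an $\varepsilon$-pseudo-orbit of length $n=kM$ along the word $\omega_{\vec p}=\omega_{p_0,p_1}\cdots\omega_{p_{k-1},p_k}$. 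Because $E_j$ is $3\delta_j$-separated while the constructed pseudo-orbit lies within $\delta_j$ of each $p_i$ at time $iM$, two distinct tuples yield pseudo-orbits that differ by at least $\delta_j$ at some anchor time, hence are $(n,\delta_j)$-separated. Grouping by the underlying word gives
$$\sum_{|\omega|=n}c_n(\omega,\varepsilon,\delta_j)\;\geq\;|E_j|^{k+1}.$$

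Inserting this into the (Bufetov-style, $m^{-n}$-normalized) definition of $h^*(G)$ yields
$$\frac{1}{n}\log\left(\frac{1}{m^n}\sum_{|\omega|=n}c_n(\omega,\varepsilon,\delta_j)\right)\;\geq\;\frac{k+1}{kM}\log|E_j|\;-\;\log m,$$
so letting $k\to\infty$ gives $h^*(G)\geq (\log|E_j|)/M-\log m$. Sending $\varepsilon\to 0$ replaces $M$ by $\lim_{\varepsilon\to 0}m_\varepsilon(\delta_j)$ (which exists by monotonicity of $m_\varepsilon$ in $\varepsilon$), and taking $\limsup$ as $\delta_j\to 0$ together with $\log|E_j|\geq(\underline{b}-o(1))\log(1/\delta_j)$ produces the claimed inequality. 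Theorem \ref{CE} together with $h(G)\geq 0$ then finishes the argument.

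The main obstacle I anticipate is the careful bookkeeping of separation constants: the $\delta_j$-drift permitted by chain mixing at each of the $k$ anchor times must not destroy the pairwise separation between different concatenations, so the factor $3$ in ``$(3\delta_j)$-separated'' (or similar padding) is essential, and one must confirm that the raw count $|E_j|^{k+1}$ survives up to a multiplicative constant that disappears in $\frac{1}{n}\log$. A secondary point is to verify from the precise definitions in Sect.~\ref{entropy} that $m_\varepsilon(\delta,G)$ is nondecreasing as $\varepsilon\to 0$ so that $\lim_{\varepsilon\to 0}m_\varepsilon(\delta)$ is well defined, and to confirm that the pseudo-entropy in Sect.~\ref{TCF} is normalized by $m^{-n}$, since otherwise the $-\log m$ correction on the right-hand side would not appear.
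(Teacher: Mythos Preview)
Your approach is essentially the paper's: invoke Theorem~\ref{CE}, pick a $3\delta$-separated set, use chain mixing to thread $\varepsilon$-chains of length $m_\varepsilon(\delta)$ between consecutive anchors, count the resulting $(kM,\delta)$-separated $\varepsilon$-pseudo-orbits, and take limits. The separated-set count, the $m^{-n}$ normalization producing the $-\log m$ term, and the passage through the lower box dimension all match the paper.

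There is one technical slip worth fixing. You assert that chain mixing gives an $\varepsilon$-pseudo-orbit ``that starts at $p$ and terminates within $\delta_j$ of $q$.'' The definition of $m_\varepsilon(\delta,G)$ in Sect.~\ref{entropy} is asymmetric in the opposite direction: it guarantees a $(w,\varepsilon)$-chain from \emph{some point of} $B(p,\delta)$ \emph{to} $q$ exactly. With your orientation, concatenating a chain ending at $q_1\in B(p_1,\delta_j)$ with one starting at $p_1$ introduces a jump of size up to $\delta_j$ at each junction, so the concatenation is only an $(\varepsilon+\delta_j)$-chain, not an $\varepsilon$-chain. The paper avoids this by building the chain \emph{backwards}: set $x'_{i_k}=x_{i_k}$, then repeatedly apply the definition to obtain $x'_{i_{j-1}}\in B(x_{i_{j-1}},\delta)$ together with a $(w^{(j-1)},\varepsilon)$-chain from $x'_{i_{j-1}}$ to $x'_{i_j}$; since each segment ends exactly where the next begins, the concatenation is a genuine $\varepsilon$-chain of length $km_\varepsilon(\delta)$, and the anchors $x'_{i_j}\in B(x_{i_j},\delta)$ are still $\delta$-separated by the $3\delta$-separation of the $x_{i_j}$. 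Once you reverse the direction of the construction (or explicitly absorb the extra $\delta_j$ and check it washes out in the limits), your argument goes through and coincides with the paper's.
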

\begin{thm}
	\label{EC}
	Let $G$ be chain transitive. Then either
	\begin{enumerate}[(1)]
		\item There is a period $k\geq 1$, such that $G$ cyclically permutes $k$ closed and open equivalence classes of $X$, and $G^k$ restricted to each equivalence classes is chain mixing; or
		\item $G$ factors onto an adding machine map.
	\end{enumerate}
\end{thm}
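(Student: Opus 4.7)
The plan is to adapt the argument of Richeson and Wiseman to the free semigroup setting, where every chain carries both a sequence of points and a word over the alphabet $\{0,\ldots,m-1\}$ recording which generator is applied at each step. For each $\varepsilon>0$ I would first introduce an equivalence relation by declaring $x\sim_\varepsilon y$ if, for every sufficiently large $n$, there exist $\varepsilon$-chains of length exactly $n$ from $x$ to $y$ and from $y$ to $x$, and then set $x\sim y$ if $x\sim_\varepsilon y$ holds for every $\varepsilon>0$. Chain transitivity of $G$ guarantees the existence of some chain between any two points, and a standard argument using uniform continuity of the generators together with the gluing of pseudo-orbits shows that each $\sim_\varepsilon$-class, hence each $\sim$-class, is both open and closed in $X$.

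Next I would analyse how the generators act on the quotient $X/\sim$. Applying any $f_j$ to an $\varepsilon$-chain yields another chain after adjusting $\varepsilon$ via uniform continuity, so each generator induces a well-defined map on equivalence classes. A crucial point is that this induced map does not depend on the choice of generator: if $f_i$ sent some class $A$ to a class $B$ different from $f_j(A)$, then concatenating a long chain inside $A$ with a single application of $f_i$ (respectively $f_j$) would force $B$ and $f_j(A)$ to coincide, contradicting the definition of $\sim$. Consequently the free semigroup action descends to a single homeomorphism $\sigma$ on $X/\sim$, reducing us essentially to the setting handled in \cite{RD}.

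If there are only finitely many equivalence classes, $\sigma$ must cyclically permute them, say with period $k$, and one verifies that the semigroup $G^k$ generated by all length-$k$ words in $\{f_0,\ldots,f_{m-1}\}$ preserves each class $C_i$ and acts chain mixingly on it, because by construction of $\sim$ every sufficiently long chain length is realisable inside a single class; this gives conclusion~(1). In the remaining case I would refine the construction: for each $n\geq 1$ define $\sim^{(n)}$ using only $\varepsilon$-chains whose lengths are divisible by $n$; chain transitivity together with the rigidity above forces the number $k_n$ of $\sim^{(n)}$-classes to form an increasing sequence with $k_n\mid k_{n+1}$ and $k_n\to\infty$. The corresponding inverse system of finite cyclic quotients has inverse limit an adding machine $\varprojlim\Z/k_n\Z$, and the quotient maps assemble into a continuous surjection $\pi\colon X\to\varprojlim\Z/k_n\Z$ such that $\pi\circ f_j=T\circ\pi$ for the adding-machine translation $T$ and every generator $f_j$, which is exactly conclusion~(2).

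The main obstacle I anticipate is verifying that the induced action on the quotient is well defined \emph{and independent of the generator chosen}. In the one-map case this is automatic, but here it requires showing that chain transitivity together with the long-chain-realisability inside each $\sim$-class forbids different generators from disagreeing on classes; the necessary diagonal/gluing argument looks like the technical heart of the proof. A second, intertwined, delicate point is ensuring that the factor map to the adding machine is genuinely a factor in the free semigroup sense, i.e.\ that all generators intertwine with the \emph{same} translation $T$; this compatibility is precisely what the generator-independence of $\sigma$ supplies, so resolving the first obstacle essentially resolves the second.
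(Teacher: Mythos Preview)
Your definition of $\sim_\varepsilon$ does not work as stated. Requiring $\varepsilon$-chains of \emph{every} sufficiently large length $n$ between $x$ and $y$ makes the relation fail even to be reflexive whenever the system has a nontrivial period: if $k_\varepsilon:=\gcd(T_\varepsilon(x))>1$ (where $T_\varepsilon(x)$ is the set of lengths of $\varepsilon$-chains from $x$ to itself), then every $\varepsilon$-chain from $x$ to $x$ has length divisible by $k_\varepsilon$, so there is no chain of length $n$ when $k_\varepsilon\nmid n$, and hence $x\not\sim_\varepsilon x$. With this definition the quotient $X/\!\sim$ is not defined and the rest of the argument cannot start.

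The paper (following Richeson--Wiseman) fixes this by first proving that $k_\varepsilon$ is independent of the base point (Lemma~\ref{GCD}) and then \emph{defining} $x\sim_\varepsilon y$ to mean that some $\varepsilon$-chain from $x$ to $y$ has length a multiple of $k_\varepsilon$. This yields exactly $k_\varepsilon$ clopen classes, cyclically permuted, and the dichotomy in the theorem is simply whether $k_\varepsilon$ stabilises or tends to infinity as $\varepsilon\to 0$. With this correct definition the issue you identify as the ``technical heart'' --- that all generators induce the same map on the quotient --- becomes a one-line observation: for any $j$, the pair $(x,f_j(x))$ is an $\varepsilon$-chain of length $1$, so $f_j(x)$ lies in the class indexed by $1\bmod k_\varepsilon$, independently of $j$. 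Your refinement $\sim^{(n)}$ via divisibility by $n$ at fixed $\varepsilon$ is likewise not the right parameter; the adding-machine case arises from letting $\varepsilon\to 0$ and watching $k_\varepsilon\to\infty$, not from varying $n$.
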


\hspace{4mm}
 This paper is organized as follows. In Sect.  \ref{PL}, we give some preliminaries. In Sect. \ref{TCF}, above all, the pseudo-separated set and the pseudo-spanning set for free semigroup actions are introduced; next, we naturally define the pseudo-entropy for free semigroup actions and prove Theorem \ref{CE}. In Sect. \ref{entropy}, we introduce these concepts of the chain recurrence, the chain mixing, the chain recurrence time and the chain mixing time for free semigroup actions, and prove Theorem \ref{UBD}. Forthermore, Theorem \ref{LBM} is obtained by means of the definition of the pseudo-entropy for free semigroup actions.  In Sect. \ref{chain}, we discuss the structure of chain transitive systems and prove Theorem \ref{EC}. Our analysis generalizes the results  obtained by Misiurewicz \cite{MM}, Bufetov \cite{BAA} and Richeson et al. \cite{RD}.

\section{Preliminaries}\label{PL}

\hspace{4mm}
Denote $\mathbb{N}_0$, $\mathbb{N}$, and $\mathbb{Z}$ as the sets of non-negative integers, positive integers and integers, respectively.

\hspace{4mm}
Let $(X,d)$ be a compact metric space and $f$ be a continuous map on $X$. A \emph{$\delta$-pseudo-orbit} is an infinite sequence $(x_i)_{i=0}^\infty$ such that $d(f(x_i),x_{i+1})\leq\delta$ for $i\geq 0$. We say that $f$ has the \emph{pseudo-orbit tracing property} if for $\varepsilon> 0$ there is $\delta_\varepsilon> 0$ such that each $\delta_\varepsilon$-pseudo-orbit can be $\varepsilon$-shadowed, that is, if $(x_i)_{i=0}^\infty$ is a $\delta$-pseudo-orbit, then there exists $z\in X$ such that $d(f^i(z),x_i)<\varepsilon$ for all $i\geq 0$.

\hspace{4mm}
We recall the definition of pseudo-entropy of $f$. The first one is due to Misiurewicz \cite{MM}. Say a collection $E$ of $\delta$-pseudo-orbits of $f$ is $(n,\varepsilon)$-separated if, for each $(x_i),(y_i)\in E$, $(x_i)\neq (y_i)$, there is a $k$, $0\leq k\leq n-1$, for which $d(x_k,y_k)\geq\varepsilon$. Denote by  $s(n,\varepsilon,\delta)$ the maximal cardinality of an $(n,\varepsilon)$-separated set of $\delta$-pseudo-orbits.

\hspace{4mm}
A collection $K$ of $\delta$-pseudo-orbits of $f$ is $(n,\varepsilon)$-spanning if for each $\delta$-pseudo-orbit $(x_i)$, there is $(y_i)\in K$ such that $d(x_i,y_i)<\varepsilon$ for all $0\leq i\leq n-1$. The minimum cardinality of a $(n,\varepsilon)$-spanning set of $\delta$-pseudo-orbits is denoted by $r(n,\varepsilon,\delta)$.

\hspace{4mm}
Let
$$
h^*(f,\varepsilon,\delta)=\limsup_{n\to\infty}\frac{1}{n}\log s(n,\varepsilon,\delta),
$$
$$
h^* (f,\varepsilon)=\lim_{\delta\to 0 }h^*(f,\varepsilon,\delta),
$$
and
$$
h^* (f)=\lim_{\varepsilon\to 0}h^* (f,\varepsilon).
$$

\hspace{4mm}
The number $h^* (f)$ is called the \emph{pseudo-entropy} of $f$.

\hspace{4mm}
Obviously,
$$
r(n,\frac{\varepsilon}{2},\delta)\geq s(n,\varepsilon,\delta)\geq r(n,\varepsilon,\delta).
$$
Thus,
$$
h^* (f)=\lim_{\varepsilon\to 0}\lim_{\delta\to 0 }\limsup_{n\to\infty}\frac{1}{n}\log r(n,\varepsilon,\delta).
$$

\begin{thm}
	\label{PE}
	\cite{MM}
	The topological entropy $h(f)$ coincides with the pseudo-entropy $h^*(f)$.
\end{thm}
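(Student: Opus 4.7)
The plan is to establish the two inequalities $h^*(f)\geq h(f)$ and $h^*(f)\leq h(f)$ separately. The first is essentially immediate, since every true orbit is a $\delta$-pseudo-orbit for every $\delta>0$: an $(n,\varepsilon)$-separated set of true orbits furnishes an $(n,\varepsilon)$-separated family of $\delta$-pseudo-orbits of equal cardinality, so $s(n,\varepsilon,\delta)$ dominates the usual Bowen separated count, and passing to $\limsup_n$, then $\delta\to 0$, then $\varepsilon\to 0$ yields $h^*(f)\geq h(f)$.

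For the reverse inequality I will encode each long $\delta$-pseudo-orbit by a finite sequence of short genuine orbit segments and count the resulting combinatorial data. Fix $\varepsilon>0$ and a block length $L$, to be chosen freely later. A routine application of the uniform continuity of $f,f^2,\ldots,f^{L-1}$ on the compact space $X$ (using an $\omega_f(\cdot)+\delta$ recursion for the error) yields a $\delta=\delta(L,\varepsilon)>0$ with the following finite-time shadowing property: every $\delta$-pseudo-orbit $(x_0,\ldots,x_{L-1})$ of length $L$ is $(\varepsilon/4)$-shadowed by the orbit of its own initial point, i.e., $d(f^i(x_0),x_i)<\varepsilon/4$ for $0\leq i<L$. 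Given any $\delta$-pseudo-orbit $(x_0,\ldots,x_{n-1})$, I then break it into blocks of length $L$ and record the block-starts $z_j:=x_{jL}$.

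Next, fix a maximal $(L,\varepsilon/2)$-separated set $F_L\subseteq X$ for $f$, which is automatically $(L,\varepsilon/2)$-spanning, and replace each $z_j$ by a nearest element $w_j\in F_L$, so that $d(f^i(w_j),f^i(z_j))<\varepsilon/2$ for $0\leq i<L$. Combining with the shadowing estimate gives $d(f^i(w_j),x_{jL+i})<3\varepsilon/4$ throughout each block. If two $\delta$-pseudo-orbits $\xi,\xi'$ are $(n,2\varepsilon)$-separated at an index $k=jL+i$, the triangle inequality then forces $d(f^i(w_j),f^i(w_j'))\geq\varepsilon/2$, so $w_j\neq w_j'$. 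Hence the encoding $\xi\mapsto (w_0,\ldots,w_{\lceil n/L\rceil-1})$ is injective on any $(n,2\varepsilon)$-separated family of $\delta$-pseudo-orbits, giving the combinatorial estimate $s(n,2\varepsilon,\delta)\leq |F_L|^{\lceil n/L\rceil}$. Passing to $\tfrac{1}{n}\log$ and $\limsup_n$ yields $h^*(f,2\varepsilon,\delta)\leq L^{-1}\log|F_L|$; subsequently taking $\delta\to 0$, then $L\to\infty$, and finally $\varepsilon\to 0$ produces $h^*(f)\leq h(f)$.

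The main technical delicacy lies in the order of quantifiers: the shadowing parameter $\delta$ depends on the block length $L$, whereas the definition of $h^*(f)$ takes $\delta\to 0$ \emph{before} any reference to $L$. Fortunately, the tower $\lim_{\varepsilon\to 0}\lim_{\delta\to 0}\limsup_{n\to\infty}$ in the definition of $h^*(f)$ is precisely calibrated for this: the inequality $h^*(f,2\varepsilon,\delta)\leq L^{-1}\log|F_L|$ holds for every $\delta\leq\delta(L,\varepsilon)$, hence survives the inner limit $\delta\to 0$ uniformly in $L$, and the outer limits then close the argument.
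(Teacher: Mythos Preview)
The paper does not prove this statement: Theorem~\ref{PE} is quoted from Misiurewicz \cite{MM} as a black box and is used, together with Bufetov's skew-product formula (Theorem~\ref{sp}), to deduce Theorem~\ref{CE}. There is therefore no ``paper's own proof'' to compare against.

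Your argument is nonetheless correct and is essentially the classical one. The inequality $h^*(f)\ge h(f)$ is immediate as you say. For the converse, the key mechanism---choosing $\delta=\delta(L,\varepsilon)$ by uniform continuity so that every $\delta$-pseudo-segment of length $L$ is $(\varepsilon/4)$-shadowed by the true orbit of its initial point, then block-encoding a long pseudo-orbit by the nearest elements of a maximal $(L,\varepsilon/2)$-separated set $F_L$---is exactly the standard device, and your triangle-inequality computation showing that $(n,2\varepsilon)$-separated pseudo-orbits receive distinct codes is clean. The resulting bound $s(n,2\varepsilon,\delta)\le |F_L|^{\lceil n/L\rceil}$ and the passage to limits are handled correctly; in particular, your closing remark about the order of quantifiers is accurate: once $\delta\to 0$ is taken (which is legitimate since the bound holds for all $\delta\le\delta(L,\varepsilon)$), the inequality $h^*(f,2\varepsilon)\le L^{-1}\log|F_L|$ holds for every $L$, and taking $L\to\infty$ followed by $\varepsilon\to 0$ gives $h^*(f)\le h(f)$.
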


\hspace{4mm}
Let $F_m^+$ be the set of all finite words of symbols $0,1,\cdots,m-1$.  For any $w\in F_m^+$, $|w|$ stands for the length of $w$, that is, the digits of symbols in $w$. Obviously, $F^+_m$ with respect to the law of composition is a free semigroup with $m$ generators. We write $w'\leq w$ if there exists a word $w''\in F^+_m$ such that $w=w''w'$. For $w=i_0i_1\cdots i_k\in F^+_m$, denote $\overline{w}=i_k\cdots i_1i_0$.

\hspace{4mm}
Denote by $\Sigma^+_m$ the set of all one-side infinite sequences of symbols $0,1,\cdots,m-1$, that is, 
$$
\Sigma^+_m=\left\{\omega=(i_0i_1\cdots)\,\big  | \,i_k=0,1,\cdots,m-1,\: k\in\mathbb{N}_0\right\}
$$

\hspace{4mm}
The metric on  $\Sigma^+_m$ by setting
\begin{center}
$d'(\omega,\omega')=\frac{1}{m^k}$, where $k=\inf\{n\,\big  | \,i_n\neq i'_n\}$.
\end{center}

\hspace{4mm}
Obviously, $\Sigma^+_m$ is compact with respect to this metric. The shift $\sigma:\Sigma^+_m\to \Sigma^+_m $ is given by the formula, for each $\omega=(i_0i_1\cdots)\in\Sigma^+_m$,
$$
\sigma(\omega)=(i_1i_2\cdots).
$$

\hspace{4mm}
Suppose that $\omega\in\Sigma^+_m$, and $a,b\in \mathbb{N}$ with $a\leq b$. We write $\omega|_{[a,b]}=w$ if $w=i_ai_{a+1}\cdots i_b$.

\hspace{4mm}
Let $G$ be a free semigroup generated by $m$ generators $f_0,f_1,\cdots,f_{m-1}$  which are continuous maps on $X$, denoted as $G:=\{f_0,f_1,\cdots,f_{m-1}\}$. To each $w\in F^+_m$, $w=i_0i_1\cdots i_{k-1}$, let $f_w=f_{i_0}f_{i_1}\cdots f_{i_{k-1}}$.  Obviousely, $f_{ww'}=f_wf_{w'}$. We assign a metric $d_w$ on $X$ by setting
$$d_w(x_1,x_2)=\max_{w'\leq \overline{w}}d\left (f_{w'}(x_1),f_{w'}(x_2)\right ).
$$

\hspace{4mm}
A subset $B$ of $X$ is called a \emph{$(w,\varepsilon,G)$-spanning} subset if, for any  $x\in X$, there exists $y\in B$ with $d_w(x,y)<\varepsilon$. The minimum cardinality of a $(w,\varepsilon,G)$-spanning subset of $X$ is denoted by $B(w,\varepsilon,G)$.

\hspace{4mm}
A subset $K$ of $X$ is called a \emph{$(w,\varepsilon,G)$-separated} subset if, for any $x_1,x_2\in K$ with $x_1\neq x_2$, one has $d_w(x_1,x_2)\geq \varepsilon$. The maximum cardinality of a $(w,\varepsilon,G)$-separated subset of $X$ is denoted by $N(w,\varepsilon,G)$.

\hspace{4mm}
Let
$$
B(n,\varepsilon,G)=\frac{1}{m^n}\sum_{|w|=n}B(w,\varepsilon,G),
$$

$$
N(n,\varepsilon,G)=\frac{1}{m^n}\sum_{|w|=n}N(w,\varepsilon,G).
$$

\hspace{4mm}
In \cite{BAA}, the author introduced the topological entropy for free semigroup actions.
    The topological entropy of free semigroup actions is defined by the formula
    $$
    \begin{aligned}
    	h(G)&=\lim_{\varepsilon\to 0}\limsup_{n\to\infty}\frac{1}{n}\log B(n,\varepsilon,G)\\
    	&=\lim_{\varepsilon\to 0}\limsup_{n\to\infty}\frac{1}{n}\log N(n,\varepsilon,G).
    \end{aligned}
    $$

\hspace{4mm}
The dynamical systems for free semigroup actions have a strong connection with skew-products which has been analyzed to obtain properties of free semigroup actions through fiber associated with the skew-product (see for instance \cite{MR4200965}). Recall that the skew-product transformation by given as follows:
$$
F:\Sigma^+ _m \times X\to\Sigma^+ _m \times X,\:\, (\omega,x)\mapsto \big(\sigma(\omega),f_{i_0}(x)\big),
$$
where $\omega=(i_0 i_1\cdots)$ and $\sigma$ is the shift map of $\Sigma^+ _m $. And the metric $D$ on $\Sigma^+ _m \times X$ be given by the formula
$$
D((\omega,x),(\omega',x'))=\max\{d'(\omega,\omega'),d(x,x')\}.
$$

\begin{thm}
	\label{sp}
	\cite{BAA}Topological entropy of the skew-product transformation $F$ satisfies
	$$h(F)=\log m +h(G).$$
\end{thm}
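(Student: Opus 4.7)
The plan is to establish the two inequalities $h(F)\leq\log m+h(G)$ and $h(F)\geq\log m+h(G)$ separately, relating Bowen $(n,\varepsilon)$-spanning and $(n,\varepsilon)$-separated sets for $F$ on $\Sigma^+_m\times X$ to the word-indexed $(w,\varepsilon,G)$-spanning and $(w,\varepsilon,G)$-separated sets on $X$ that define $h(G)$. The key observation driving both directions is that if $\omega=(i_0i_1\cdots)$ has $\omega|_{[0,n-1]}=w$, then for each $k\leq n$ one has $F^k(\omega,x)_X=f_{i_{k-1}}\circ\cdots\circ f_{i_0}(x)=f_{w'}(x)$ for some suffix $w'\leq\overline{w}$. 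Hence on a common $\omega$-fiber of length $n$, the $X$-part of the $F$-Bowen distance agrees with the word metric $d_w(x,x')$, up to a harmless off-by-one.

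For the upper bound, I would fix $\varepsilon>0$, choose $N$ with $m^{-N}<\varepsilon$, and partition $\Sigma^+_m$ into the $m^{n+N}$ cylinders of length $n+N$, picking one representative $\omega_C$ per cylinder $C$. For every word $w$ of length $n$, I take a minimal $(w,\varepsilon,G)$-spanning set $E_w\subset X$ of cardinality $B(w,\varepsilon,G)$, and form $\{(\omega_C,y):C\text{ a cylinder of length }n+N,\ y\in E_{w_C}\}$, where $w_C$ is the length-$n$ prefix of $C$. This set is $(n,\varepsilon)$-spanning for $F$: the $\Sigma^+_m$ coordinate is controlled because $d'(\sigma^k\omega,\sigma^k\omega_C)\leq m^{-(n+N-k)}<\varepsilon$ for $k\leq n-1$, while the $X$-coordinate is controlled by the spanning property of $E_{w_C}$ along the common word $w_C$. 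Its total cardinality is $m^N\sum_{|w|=n}B(w,\varepsilon,G)=m^{n+N}\,B(n,\varepsilon,G)$; taking logs, dividing by $n$, and passing to $\limsup_n$ and then $\varepsilon\to 0$ yields $h(F)\leq\log m+h(G)$.

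For the lower bound, I would, for each word $w$ of length $n$, pick a maximal $(w,\varepsilon,G)$-separated set $E_w\subset X$ of cardinality $N(w,\varepsilon,G)$ and an arbitrary $\omega_w\in\Sigma^+_m$ extending $w$, then set $S=\{(\omega_w,x):|w|=n,\ x\in E_w\}$. For $\varepsilon<1$, this is $(n,\varepsilon)$-separated for $F$: two points with distinct $w\neq w'$ have their $\omega$-parts differing at some position $j<n$, so $d'(\sigma^j\omega_w,\sigma^j\omega_{w'})=1>\varepsilon$, while for $w=w'$ the separation is inherited from $E_w$ along the shared word. Thus the Bowen separated cardinality is at least $m^n\,N(n,\varepsilon,G)$, and the usual limit procedure gives $h(F)\geq\log m+h(G)$.

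The main obstacle I anticipate is the bookkeeping between the three scales—cylinder length $n+N$ in $\Sigma^+_m$, the resolution $\varepsilon$ in $X$, and the iteration length $n$—and keeping the off-by-one in word length versus iteration count consistent throughout. The argument succeeds precisely because for $k<n$ the action of $F^k$ on the $X$-factor depends only on the first $n$ symbols of $\omega$, which cleanly decouples the symbolic combinatorics (contributing the $\log m$) from the word-metric complexity (contributing $h(G)$).
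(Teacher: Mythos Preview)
The paper does not actually prove Theorem~\ref{sp}; it is quoted from Bufetov~\cite{BAA} and used as a black box. Your argument is correct and is precisely Bufetov's original strategy: build an $(n,\varepsilon)$-spanning set for $F$ from cylinders of length $n+N$ in $\Sigma^+_m$ paired with $(w,\varepsilon,G)$-spanning sets on fibers, and build an $(n,\varepsilon)$-separated set for $F$ by choosing one $\omega_w$ per word $w$ and pairing with $(w,\varepsilon,G)$-separated sets.

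It is worth noting that the paper does carry out exactly this scheme, but for the \emph{pseudo}-entropy analogue $h^*(F)=\log m+h^*(G)$ (Lemmas~\ref{SE} and~\ref{SP} in the proof of Theorem~\ref{CE}). There the separated-set lower bound is identical in spirit to your lower bound, and the spanning-set upper bound again chooses representatives $\omega(i)$ with prescribed initial block of length $n+C(\varepsilon)$; the only extra ingredient in the pseudo-orbit setting is an appeal to the pseudo-orbit tracing property of the shift to align the symbolic coordinates of an arbitrary $\delta$-pseudo-orbit with a true $\sigma$-orbit. So your proof is not merely correct---it is the template the paper itself adapts for its main new result.
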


\hspace{4mm}
We recall the definitions of box dimension more precisely in \cite{FK}. Let $E$ be a non-empty subset of $X$. Let $N_\delta(E)$ be the smallest number of stes of diameter at most $\delta$ which can cover $F$. The \emph{lower} and \emph{upper box dimensions} of $E$ respectively are defined as
$$
\underline{\dim}_B E=\liminf_{\delta\to 0}\frac{\log N_\delta (E)}{-\log\delta},
$$
and
$$
\overline{\dim}_B E=\limsup_{\delta\to 0}\frac{\log N_\delta (E)}{-\log\delta}.
$$

\section{The pseudo-entropy for free semigroup actions}\label{TCF}
In this section we will introduce the concept of pseudo-entropy for free semigroup actions and prove Theorem \ref{CE}.

\hspace{4mm}
According to Bahabadi \cite{BZ}, recall that for $w=i_0\cdots i_{n-1}\in F^+_m$, a \emph{$(w,\delta)$-chain} (or $(w,\delta)$-pseudo-orbit) of $G$ from $x$ to $y$ is a sequence $(x_0=x,\cdots,x_n=y)$ such that $d(f_{i_j}(x_j),x_{j+1})\leq\delta$ for $j=0,\cdots,n-1$. To simplify notation, we usually write $(x_j)_{j=0}^n$. For $w\in F^+ _m$ with $|w|=n$, denote 
\begin{center}
$E(w,\delta):=\big\{(x_i)_{i=0} ^n \ |\ (x_i)_{i=0}^n$ is a $(w,\delta)$-chain$\big\}$.
\end{center}

\hspace{4mm}
 Similar to Misiurewicz \cite{MM}, we mimic this definition of Bufetov \cite{BAA} by  pseudo-orbits  to introduce these following definitions for free semigroup actions.
 A subset $B$ of $E(w,\delta)$ is called a \emph{$(w,\varepsilon,\delta,G)$-pseudo-spanning set} of $X$ if, for any $(x_i)_{i=0} ^n \in E(w,\delta)$, there is a $(y_i)_{i=0} ^n\in B$, such that $d(x_i,y_i)\leq\varepsilon$ for every $0\leq i<n$. The minimum cardinality of a $(w,\varepsilon,\delta,G)$-pseudo-spanning set of $X$ is denoted by  $B^*(w,\varepsilon,\delta,G)$.
 A subset $K$ of $E(w,\delta)$ is called a \emph{$(w,\varepsilon,\delta,G)$-pseudo-separated set} of $X$ if, for any $(x_i)_{i=0} ^n$, $(y_i)_{i=0} ^n \in  K$, $(x_i)_{i=0} ^n\neq (y_i)_{i=0} ^n$, there is some $0\leq i<n$, such that $d(x_i,y_i)>\varepsilon$. 
The maximum cardinality of a $(w,\varepsilon,\delta,G)$-pseudo-separated set of $X$ is denoted by  $N^*(w,\varepsilon,\delta,G)$.

\hspace{4mm}
Let
$$
B^*(n,\varepsilon,\delta,G)=\frac{1}{m^n}\sum_{|w|=n}B^*(w,\varepsilon,\delta,G),
$$

$$
N^*(n,\varepsilon,\delta,G)=\frac{1}{m^n}\sum_{|w|=n}N^*(w,\varepsilon,\delta,G).
$$

\hspace{4mm}
Obviously,
$$
B^*(w,\frac{\varepsilon}{2},\delta,G)\geq N^*(w,\varepsilon,\delta,G)\geq B^*(w,\varepsilon,\delta,G),
$$

whence,
$$
B^*(n,\frac{\varepsilon}{2},\delta,G)\geq N^*(n,\varepsilon,\delta,G)\geq B^*(n,\varepsilon,\delta,G).
$$

\begin{remark}
	If $G=\{f\}$, we use  $(n,\varepsilon,\delta,f)$-pseudo-separeted (spanning) set instead of $(n.\varepsilon,f)$-separeted (spanning) set of $\delta$-pseudo-orbits of $f$ as the two sets are the same.
\end{remark}

\hspace{4mm}
New let, 
$$ h^*(\varepsilon,\delta,G)=\limsup_{n\to\infty}\frac{1}{n}\log N^*(n,\varepsilon,\delta,G),
$$
$$
h^*(\varepsilon,G)=\lim_{\delta\to 0}h^*(\varepsilon,\delta,G),
$$
and 
$$
h^*(G)=\lim_{\varepsilon\to 0}h^*(\varepsilon,G).
$$
\begin{definition}
	The number $h^*(G)$ is called pseudo-entropy for the free semigroup action $G$.	
\end{definition}

\hspace{4mm}
It easily follows that
$$
h^*(G)=\lim_{\varepsilon\to 0}\lim_{\delta\to 0}\limsup_{n\to\infty}\frac{1}{n}\log B^*(n,\varepsilon,\delta,G).
$$ 
\begin{remark}
	If $G=\{f\}$, it is clear that $h^*(G)$ coincides with $h^*(f)$.
\end{remark}
\hspace{4mm}
Next, we will prove Theorem \ref{CE}. In fact, it is enough to show that $h^*(F)=\log m+h^*(G)$ by Theorem \ref{PE} and \ref{sp}.  To this end, we adopt the method of Bufetov \cite{BAA}. Hence, we need the following two lemmas.

\begin{lem}
\label{SE}
For any natural number $n\in \mathbb{N}$ and $0<\varepsilon,\delta<\frac{1}{2}$,
$$
N^*(n,\varepsilon,\delta,F)\geq\sum_{|w|=n}N^*(w,\varepsilon,\delta,G).
$$
\end{lem}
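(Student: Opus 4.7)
The plan is to construct, for each word $w$ of length $n$, an injection of a maximal $(w,\varepsilon,\delta,G)$-pseudo-separated subset $K_w \subset E(w,\delta)$ (with $|K_w| = N^*(w,\varepsilon,\delta,G)$) into the set of $\delta$-pseudo-orbits of $F$, and then verify that the union over all $w$ with $|w|=n$ of the resulting images remains $(n,\varepsilon)$-separated. A count then yields the lemma.

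Concretely, for each $w = i_0\cdots i_{n-1} \in F_m^+$, fix an extension $\omega_w \in \Sigma_m^+$ with $\omega_w|_{[0,n-1]} = w$ (e.g., append zeros). Given a $(w,\delta)$-chain $(x_j)_{j=0}^n \in K_w$, I would lift it to the sequence $\bigl((\sigma^j(\omega_w), z_j)\bigr)_{j \geq 0}$ in $\Sigma_m^+ \times X$ where $z_j = x_j$ for $0 \leq j \leq n$ and $z_{j+1} = f_{(\omega_w)_j}(z_j)$ for $j \geq n$, so the tail is an honest $F$-orbit. Because $F(\sigma^j(\omega_w), z_j) = (\sigma^{j+1}(\omega_w), f_{(\omega_w)_j}(z_j))$, the symbolic components match exactly, and $D\bigl(F(\sigma^j(\omega_w),z_j),(\sigma^{j+1}(\omega_w),z_{j+1})\bigr) = d(f_{(\omega_w)_j}(z_j), z_{j+1})$, which is $\leq \delta$ for $j < n$ by the chain condition and equal to $0$ for $j \geq n$ by construction. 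Hence the lift is genuinely a $\delta$-pseudo-orbit of $F$.

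It remains to check $(n,\varepsilon)$-separation for the whole collection, and this splits into two cases. If two lifts come from distinct words $w \neq w'$, let $k \in [0,n-1]$ be the first position where $\omega_w$ and $\omega_{w'}$ disagree; then $d'(\sigma^k(\omega_w),\sigma^k(\omega_{w'})) = 1 > \varepsilon$, so the $D$-distance between the $k$-th entries of the two lifts exceeds $\varepsilon$ (this is where the hypothesis $\varepsilon < \tfrac{1}{2}$ is used). If two lifts come from distinct chains in the same $K_w$, the $(w,\varepsilon,\delta,G)$-pseudo-separation of $K_w$ yields some $0 \leq i < n$ with $d(x_i,x'_i) > \varepsilon$, and the $X$-component of $D$ immediately delivers the required gap. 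Summing $|K_w| = N^*(w,\varepsilon,\delta,G)$ over all words of length $n$ produces an $(n,\varepsilon)$-separated family of $\delta$-pseudo-orbits of $F$ of cardinality $\sum_{|w|=n} N^*(w,\varepsilon,\delta,G)$, which is the desired bound. I do not anticipate a genuine obstacle: the argument is a packaging of the skew-product construction from Bufetov's proof of Theorem \ref{sp}, and the only delicate point is ensuring that the symbolic discrepancy of size $1$ in case $w \neq w'$ strictly exceeds $\varepsilon$, which the assumption $\varepsilon < \tfrac{1}{2}$ guarantees.
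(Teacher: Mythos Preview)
Your proof is correct and follows essentially the same route as the paper: choose for each word $w$ of length $n$ an extension $\omega_w\in\Sigma_m^+$, lift a maximal $(w,\varepsilon,\delta,G)$-pseudo-separated set to $(n,\delta)$-chains of $F$, and verify $(n,\varepsilon)$-separation by using the symbolic coordinate when the words differ and the $X$-coordinate when they agree. The only cosmetic difference is that you extend each lift to an infinite $\delta$-pseudo-orbit of $F$ (matching Misiurewicz's formulation), whereas the paper works with finite $(n,\delta)$-chains and invokes the earlier remark that the two viewpoints yield the same $N^*(n,\varepsilon,\delta,F)$.
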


\begin{proof}
Let $N=m^n$, there are $N$ distinct words of length $n$ in $F^+ _m$. Denote these words by $w^{(1)},\cdots,w^{(N)}$. For any $i=1,\cdots,N$, let $\omega(i)\in\Sigma^+ _m$ be an arbitrary sequence such that 
$\omega(i)|_{[0,n-1]}=w^{(i)}$. Suppose that $B_i$ is a $(w^{(i)},\varepsilon,\delta,G)$-pseudo-separated set of maximum cardinality of $X$ for all $1\leq i\leq N$. For any $(x_k ^{(i)})_{k=0}^n\in B_i$, consider that
$$
\left (\big(\omega(i),x^{(i)}_0 \big),\big(\sigma(\omega(i)),x^{(i)}_1 \big),\cdots,\big(\sigma^{n-1}(\omega(i)),x^{(i)}_{n-1} \big),\big(\sigma^{n}(\omega(i)),x^{(i)}_{n} \big)\right ).
$$
It clear that it is an $(n,\delta)$-chain of $F$ as $(x_k ^{(i)})_{k=0}^n$ is a $(w^{(i)},\delta)$-chain of $G$.

\hspace{4mm}
Put
$$
K=\left \{\big(\sigma^{k}(\omega(i)),x^{(i)}_{k}\big)_{k=0}^n\,\left   | \, (x_k ^{(i)})_{k=0}^n\right .\in B_i, 1\leq i\leq N\right \}.
$$
We chaim that $K$ forms an $(n,\varepsilon,\delta,F)$-pseudo-separated set of $\Sigma^+ _m \times X$. Indeed, it suffices to check that these $(n,\delta)$-chains of $F$ determined by $B_i$ and $B_j$ are $\varepsilon$-separated where $i\neq j$ and $1\leq i,j\leq N$. For any $(x^{(i)}_k)_{k=0}^n\in B_i$ and $(x^{(j)}_k)_{k=0}^n\in B_j$, we have $(\sigma^{k}(\omega(i)),x^{(i)}_{k})_{k=0}^n, (\sigma^{k}(\omega(j)),x^{(j)}_{k})_{k=0}^n\in K$. Since $\omega(i)|_{[0,n-1]}=w^{(i)}$, $\omega(j)|_{[0,n-1]}=w^{(j)}$ and $w^{(i)}\neq w^{(j)}$, then $w^{(i)}_k\neq w^{(j)}_k$ for some $0\leq k\leq n-1$, this gives us  $d'(\sigma^k(\omega(i)),\sigma^k(\omega(j)))=1>\varepsilon$. Therefore $K$ is an $(n,\varepsilon,\delta,F)$-pseudo-separated set of $\Sigma^+ _m \times X$. The lemma is proved.
\end{proof}

\begin{lem}
\label{SP}
For any $\varepsilon>0$, there is some $\delta_\varepsilon>0$, for any $0<\delta<\delta_\varepsilon$ and  $n\in\mathbb{N}$, we have
$$
B^*(n,\varepsilon,\delta,F)\leq K(\varepsilon)\left(\sum_{|w|=n}B^*(w,\varepsilon,\delta,G)\right),
$$
where $K(\varepsilon)$ is a positive constant that depends only on $\varepsilon$.
\end{lem}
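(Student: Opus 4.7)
The plan is to exploit the rigidity of the symbolic factor $\Sigma^+_m$ under the shift: once $\delta$ is smaller than a threshold depending on $\varepsilon$, the chain condition $d'(\sigma(\omega_i),\omega_{i+1})\leq\delta$ forces the leading symbols of $\omega_{i+1}$ to match those of $\sigma(\omega_i)$, which iteratively pins down a long initial segment of each $\omega_i$ in terms of a single segment of $\omega_0$. Set $k:=\lceil -\log_m\varepsilon\rceil$ (so that $m^{-k}\leq\varepsilon$) and $\delta_\varepsilon:=m^{-k}$. For any $0<\delta<\delta_\varepsilon$ and any $(n,\delta)$-chain $((\omega_i,x_i))_{i=0}^n$ of $F$, an induction on $i$ shows that $\omega_i[j]=\omega_0[i+j]$ for all $0\leq i\leq n-1$ and $0\leq j\leq k-1$. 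In particular, the word $w:=\omega_0[0]\omega_0[1]\cdots\omega_0[n-1]\in F^+_m$ of length $n$ drives the $x$-coordinate, and $(x_0,\ldots,x_n)$ is then a $(w,\delta)$-chain of $G$ by the $x$-component of the $F$-chain condition.

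Using this, I construct a pseudo-spanning set for $F$ as follows. For each prefix $u=u_0\cdots u_{n+k-2}\in\{0,\ldots,m-1\}^{n+k-1}$, let $w(u):=u_0\cdots u_{n-1}$, let $\omega(u):=u_0 u_1\cdots u_{n+k-2}000\cdots\in\Sigma^+_m$, and fix a $(w(u),\varepsilon,\delta,G)$-pseudo-spanning set $B_{w(u)}\subseteq E(w(u),\delta)$ of minimum cardinality. For each $(y_i)_{i=0}^n\in B_{w(u)}$ include the sequence $((\sigma^i(\omega(u)),y_i))_{i=0}^n$ in our candidate set. This is an honest $(n,\delta)$-chain of $F$, since the symbolic coordinate is a genuine $\sigma$-orbit and $(y_i)$ is a $(w(u),\delta)$-chain. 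To verify spanning, given any $(n,\delta)$-chain $((\omega_i,x_i))_{i=0}^n$ of $F$, take $u:=\omega_0[0]\cdots\omega_0[n+k-2]$ and pick $(y_i)\in B_{w(u)}$ with $d(x_i,y_i)\leq\varepsilon$ for $0\leq i\leq n-1$. By the rigidity above, $\sigma^i(\omega(u))$ and $\omega_i$ agree in positions $0,\ldots,k-1$ for each such $i$, whence $d'(\sigma^i(\omega(u)),\omega_i)\leq m^{-k}\leq\varepsilon$, and therefore $D((\sigma^i(\omega(u)),y_i),(\omega_i,x_i))\leq\varepsilon$.

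A straightforward count then yields
$$
B^*(n,\varepsilon,\delta,F)\;\leq\;\sum_{u\in\{0,\ldots,m-1\}^{n+k-1}}B^*(w(u),\varepsilon,\delta,G)\;=\;m^{k-1}\sum_{|w|=n}B^*(w,\varepsilon,\delta,G),
$$
since each word $w$ of length $n$ arises from exactly $m^{k-1}$ prefixes $u$. Hence the lemma holds with $K(\varepsilon):=m^{k(\varepsilon)-1}$, which depends only on $\varepsilon$. The main obstacle I would need to handle carefully is the inductive $k$-symbol rigidity, which is what forces the symbolic ``cost'' of covering $\Sigma^+_m\times X$ to be absorbed into a single factor depending only on $\varepsilon$; without it, the prefix length would have to grow with $n$ and the would-be constant $K(\varepsilon)$ would blow up. The rest (that the candidate sequences really are $(n,\delta)$-chains of $F$, and the final counting) is routine bookkeeping.
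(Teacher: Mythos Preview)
Your overall strategy --- pin down the symbolic coordinate up to an $\varepsilon$-window using the chain condition, then cover the fibre with a $(w,\varepsilon,\delta,G)$-pseudo-spanning set --- is exactly the paper's strategy. But the key ``$k$-symbol rigidity'' step is wrong as stated, and this breaks both the identification of the driving word $w$ and the symbolic spanning estimate.

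Concretely, your induction does not close. From $d'(\sigma(\omega_i),\omega_{i+1})<m^{-k}$ you get $\omega_{i+1}[j]=\omega_i[j+1]$ only for $0\le j\le k$. If the inductive hypothesis is $\omega_i[j]=\omega_0[i+j]$ for $0\le j\le k-1$, then for $j=k-1$ you would need $\omega_i[k]=\omega_0[i+k]$, which is \emph{not} covered by the hypothesis; the window of agreement with $\omega_0$ shrinks by one at every step and is exhausted after roughly $k$ iterations. A two-symbol example makes this visible: with $m=2$, $k=1$, $\omega_0=0000\cdots$, $\omega_1=0010\cdots$, $\omega_2=0100\cdots$, $\omega_3=1000\cdots$ is a legitimate $\delta$-chain for any $\delta<1/2$, yet $\omega_3[0]=1\ne 0=\omega_0[3]$. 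Consequently $(x_i)$ need not be a $(w(u),\delta)$-chain for $w(u)=\omega_0[0]\cdots\omega_0[n-1]$, and $\sigma^i(\omega(u))$ need not be $\varepsilon$-close to $\omega_i$.

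The repair is exactly what the paper does: instead of reading the word off $\omega_0$, invoke the pseudo-orbit tracing property of $(\Sigma_m^+,\sigma)$ to produce a genuine orbit $\omega$ that $\varepsilon$-shadows $(\omega_i)$; then $\omega[i]=\omega_i[0]$ for all $i$, the driving word is $w=\omega_0[0]\omega_1[0]\cdots\omega_{n-1}[0]$, and the prefix $\omega|_{[0,n+C(\varepsilon)-1]}$ indexes the spanning family. Equivalently, you can bypass the abstract shadowing lemma and set $\omega:=(\omega_0[0],\omega_1[0],\omega_2[0],\ldots)$ directly; the identity $\omega_{i+j}[0]=\omega_i[j]$ for $0\le j\le k$ (which \emph{does} follow by your one-step relation, iterated forward rather than backward to $\omega_0$) gives $d'(\sigma^i(\omega),\omega_i)\le m^{-(k+1)}\le\varepsilon$. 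With this correction your construction and count go through with the same constant $K(\varepsilon)=m^{k(\varepsilon)-1}$.
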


\begin{proof}
	
	For any $\varepsilon>0$,  let $C(\varepsilon)$ be a minimum positive integer such that $m^{-C(\varepsilon)}<\varepsilon$. Let $N=m^{n+C(\varepsilon)}$, there are $N$ distinct words of length $n+C(\varepsilon)$ in $F_m^+$. Denote these words by $w^{(1)},\cdots,w^{(N)}$.  For any $i=1,\cdots,N$, let $\omega(i)\in\Sigma^+ _m$ be an arbitrary sequence such that $\omega(i)|_{[0,n+C(\varepsilon)-1]}=w^{(i)}$. Since $(\Sigma_m^+,\sigma)$ has pseudo-orbit tracing property, there is $\delta_\varepsilon>0$, for any  $0<\delta<\delta_\varepsilon$, such that each $\delta$-pseudo-orbit of $\sigma$ can be $\varepsilon$-shadowed.
	Suppose that $E_i$ is  $(\omega(i)|_{[0,n-1]},\varepsilon,\delta,G)$-pseudo-spanning set of minimum cardinality of $X$ for all $i=1,\cdots,N$. For any $(y^{(i)}_k)_{k=0}^n$ in $E_i$ with $(\omega(i)|_{[0,n-1]},\delta)$-chain of $G$,  we can construte a $(n,\delta)$-chain of $F$  similar to Lemma \ref{SE},	that is, 
	$$
	\left  (\big (\omega(i),y^{(i)}_0\big ),\big(\sigma(\omega(i)),y^{(i)}_1\big),\cdots,\big(\sigma^{n-1}(\omega(i)),y^{(i)}_{n-1}\big),\big(\sigma^{n}(\omega(i)),y^{(i)}_{n}\big)\right ).
	$$
	
	\hspace{4mm}
	Put
	$$
	H=\left \{\big(\sigma^{k}(\omega(i)),y^{(i)}_k\big)_{k=0}^n \,\left  | \, (y^{(i)}_k)_{k=0}^n\in E_i,\:1\leq i\leq N\right .\right \}.
	$$
	We claim that $H$ forms an $(n,\varepsilon,\delta,F)$-pseudo-spanning set of $\Sigma_m^+\times X$. Indeed, suppose now that $(\omega^{(k)},x^{(k)})_{k=0}^n$ is a $(n,\delta)$-chain of $F$ where $\omega^{(k)}=(i^{(k)}_0 i^{(k)}_1\cdots)\in \Sigma_m^+$ for every $k=0, \cdots,n$. Clearly, $(\omega^{(0)},\omega^{(1)},\cdots,\omega^{(n)})$ is a $(n,\delta)$-chain of $\sigma$, by the pseudo-orbit tracing property of $(\Sigma^+_m,\sigma)$,  this implies that  there is an $\omega\in\Sigma^+_m$ such that $d'(\sigma^k(\omega),\omega^{(k)})<\varepsilon$ for all $0\leq k\leq n$. This yields that $\sigma^k(\omega)|_{[0,C(\varepsilon)-1]}=\omega^{(k)}|_{[0,C(\varepsilon)-1]}$. Moreover, we have $\omega|_{[0,n-1]}=i_0^{(0)}i_0^{(1)}\cdots i_0^{(n-1)}$. It is clear that $\omega|_{[0,C(\varepsilon)+n-1]}=\omega(i)|_{[0,C(\varepsilon)+n-1]}=w^{(i)}$ for some  $1\leq i\leq N$, this implies  $$\sigma^k(\omega)|_{[0,C(\varepsilon)-1]}=\sigma^k(\omega(i))|_{[0,C(\varepsilon)-1]},$$ and hence $\sigma^k(\omega(i))|_{[0,C(\varepsilon)-1]}=\omega^{(k)}|_{[0,C(\varepsilon)-1]}$ for all $0\leq k\leq n$.
	Since $(x^{(0)},x^{(1)},\cdots,x^{(n)})$ is an $(i_0^{(0)}i_0^{(1)}\cdots i_0^{(n-1)},\delta)$-chain of $G$ and 
	$$
	\omega(i)|_{[0,n-1]}=\omega|_{[0,n-1]}=i_0^{(0)}i_0^{(1)}\cdots i_0^{(n-1)},
	$$
	this gives us $(x^{(0)},x^{(1)},\cdots,x^{(n)})$ is an $(\omega(i)|_{[0,n-1]},\delta)$-chain of $G$. Therefore, there is $(y^{(i)}_0,y^{(i)}_1,\cdots,y^{(i)}_n)$ in $E_i$, such that $d(y^{(i)}_k ,x^{(k)})<\varepsilon$ for each $k=0,\cdots,n-1$. As $(y^{(i)}_0,y^{(i)}_1,\cdots,y^{(i)}_n)$ is an $(\omega(i)|_{[0,n-1]},\delta)$-chain of $G$,  we deduce that there exists an $(n,\delta)$-chain of $F$, that is,
	$$
	\left  (\big (\omega(i),y^{(i)}_0\big ),\big(\sigma(\omega(i)),y^{(i)}_1\big),\cdots,\big(\sigma^{n-1}(\omega(i)),y^{(i)}_{n-1}\big),\big(\sigma^{n}(\omega(i)),y^{(i)}_{n}\big)\right )\in H.
	$$
	such that 
	$$
	D\left (\big(\sigma^k(\omega(i)),y^{(i)}_k\big),\big(\omega^{(k)},x^{(k)}\big)\right )<\varepsilon
	$$
	for all $0\leq k\leq n.$ 
	Consequently, $H$ is an $(n,\varepsilon,\delta,F)$-pseudo-spanning set of $\Sigma_m^+\times X$. The number of $H$ is not greater than $K(\varepsilon)(\sum_{|w|=n}B^*(w,\varepsilon,\delta,G))$, where $K(\varepsilon)$ is a positive constant that depends only on $\varepsilon$. The lemma is proved.
\end{proof}

\hspace{4mm}
Now, we can obtain immediately Theorem \ref{CE}.
\begin{proof}[Proof of Theorem \ref{CE}] 
	From Lemma \ref{SE} we have
	$$N^*(n,\varepsilon,\delta,F)\geq\sum_{|w|=n}N^*(w,\varepsilon,\delta,G),$$
	whence, taking logarithms and limits, we obtain that
	$$h^*(F)\geq \log m+h^*(G).$$
	In this way, from Lemma \ref{SP}, we have
	$$B^*(n,\varepsilon,\delta,F)\leq K(\varepsilon)m^n B^*(n,\varepsilon,\delta,G),$$
	whence,
	$$h^*(F)\leq \log m+h^*(G).$$
	Combining these two inequalities we find that
	$$h^*(F)= \log m+h^*(G).$$
	We conclude by Theorem \ref{PE} and  \ref{sp} that
	$$h(G)=h^*(G).$$
	This finishes the proof of the theorem.
\end{proof}

\begin{remark}
	If $G=\{f\}$, then $h^*(f)$ coincides with the pseudo-entropy of $f$ defined by Misiurewicz \cite{MM}.
\end{remark}


\section{Chain recurrence rates and topological entropy}\label{entropy}
In this section, we mainly introduce these concepts of the chain recurrence and the chain mixing, the chain recurrence time and the chain mixing time for free semigroup actions and discuss some propositions of these notions, and prove Theorem \ref{UBD} and Theorem \ref{LBM}.

\hspace{4mm}
Let $(X,d)$ be a compact metric space and $G$ be a free semigroup generated by $m$ generators $f_0,f_1,\cdots,f_{m-1}$  which are continuous maps on $X$. We define the chain recurrence for free semigroup actions as follows:
\begin{definition}
	One says that  $x\in X$ is chain recurrence point of $G$ if for every $\varepsilon>0$, there is a $(w,\varepsilon)$-chain from $x$ to itself for some $w\in F^+_m$. We say that $G$ is chain recurrence if every point of $X$ is chain recurrent. 
\end{definition} 

\hspace{4mm}
Chain transitive and totally chain transitive of free  semigroup actions were introduced by \cite{BZ} and \cite{MR3842255}, respectively. We say $G$ is \emph{chain transitive} if for every $\varepsilon>0$ and any $x,y\in X$, there is an $(w,\varepsilon)$-chain from $x$ to $y$ for some $w\in F_m^+$. $G$ is \emph{totally chain transitive} if $G^k$ is chain transitive for all $k\geq 1$. According to \cite{RD}, we may define the chain mixing for free semigroup actions as follows:

\begin{definition}
	$G$ is said to be $\varepsilon$-chain mixing if there is an $N>0$ such that for any $x,y\in X$ and any $n\geq N$, there is a $(w,\varepsilon)$-chain from $x$ to $y$ for some $w\in F^+_m$ with $|w|=n$. $G$ is called chain mixing if it admits $\varepsilon$-chain mixing for every $\varepsilon>0$.
\end{definition}

\begin{remark}
	As compactness of $X$, a equivalent way to say $G$ is chain mixing is to say that for any $\varepsilon>0$ and $x,y\in X$, there is an $N>0$ such that for any $n\geq N$, there is a $(w,\varepsilon)$-chain from $x$ to $y$ for some $w\in F^+_m$ with $|w|=n$.
\end{remark}

\hspace{4mm}
If $x$ is a chain recurrence point, define the \emph{$\varepsilon$-chain recurrence time}  $r_\varepsilon(x,G)$ to be the smallest $n$ such that there is a $(w,\varepsilon)$-chain from $x$ to itself for some $w\in F^+_m$ with $|w|=n$. If $G$ is chain rucurrent, define $r_\varepsilon(G)$ to be the maximum over all $x$ of  $r_\varepsilon(x,G)$. To see that this maximum exists, observe that if there is a $(w,\varepsilon)$-chain from $x$ to itself for some $w\in F^+_m$, there is a neighboehood $U$ of $x$ such that for all $y\in U$, there is $(w,\varepsilon)$-chain from $y$ to itself   {for above $w$}. Then the compactness of $X$ gives an upper bound on $r_\varepsilon(G)$. 

\hspace{4mm} 
If $G$ is chain mixing, for $0<\varepsilon<\delta$ and $x\in X$, define \emph{chain mixing time} $m_\varepsilon(x,\delta, G)$ to be the smallest $N$ such that for any $n\geq N$ and $y\in X$, there exists a $(w,\varepsilon)$-chain from some point in $B(x,\delta)$ to $y$ for some $w\in F^+_m$ with $|w|=n$. We define $m_\varepsilon(\delta, G)$ to be the maximum over all $x$ of $m_\varepsilon(x,\delta, G)$. The muximum exists by compactness.

\hspace{4mm} 
Inspired by Example 2 in \cite{RD}, we give the following Example \ref{eg4.1} to illustrate the existence of a system for free semigroup actions that is chain transitive but not chain mixing.
\begin{example}
	\label{eg4.1}
	Let $X$ be the disjoint union of two circles and $G=\{f_0,f_1\}$ where $f_0$ is the map sending a point $x$ to the point $2x$  in the other circle and $f_1$ is the map sending a point $x$ to the point $3x$ in the other circle. Obviously, $G$ is chain transitive but not chain mixing, because it is not $\varepsilon$-chain mixing for any $\varepsilon$ smaller than the distance between the two circles. However, $G^2$ restricted  to one circle is chain mixing.
\end{example}

\hspace{4mm} 
Next we provide an example of chain mixing for free semigroup actions as follows.
\begin{example}
	We define two continuous maps $f_0$, $f_1$ on $\Sigma^+_2$ as follows:
	$$
	f_0(s_0s_1\cdots)=0s_0s_1\cdots,\;\;f_1(s_0s_1\cdots)=1s_0s_1\cdots.
	$$
	Put $G=\{f_0,f_1\}$. In \cite{BZ}, the author proved $G$ has the shadowing property.  We claim that $(\Sigma^+_2,G)$ is chain mixing. Indeed, suppose that $\varepsilon>0$ and $\omega'=(i_0i_1\cdots),\omega''=(j_0j_1\cdots)\in\Sigma^+_2$ are given. Note that there is an $N\in\mathbb{N}$ such that $\frac{1}{2^{N-1}}<\varepsilon$. Next we have to construct an $\varepsilon$-chain $(\omega_i)_{i=0}^n$ from $\omega'$ to $\omega''$ of length exactly $n$ with $n\geq N$. Firstly, put $\omega_0=\omega'$, and $\omega_n=\omega''$; Then, for $1\leq i\leq n-2$, let  $\omega_i=(j_{n-i}\cdots j_{n-2}j_{n-1}i_0i_{1}\cdots i_{N}\cdots)$ such that $\omega_i\big|_{[i,i+N]}=\omega'\big|_{[0,N]}=i_0\cdots i_N$. Then it is easy to see that $(\omega_i)_{i=0}^n$ is a $(w,\varepsilon)$-chain from $\omega'$ to $\omega''$ of length $n$ where $w=j_{n-1}j_{n-2}\cdots j_0$.
\end{example}

\hspace{4mm}
Let $(X,d)$ and $(Y,d_Y)$ be compact metric spaces. Let $G:=\{f_0,f_1,\cdots,f_{m-1}\}$ where $f_0,f_1\cdots,f_{m-1}$ are continuous maps on $X$, and $H:=\{g_0,g_1,\cdots,g_{n-1}\}$ where $g_0,g_1,\cdots,g_{n-1}$ are continuous maps on $Y$.
Let 
$$
G\times H:=\left \{(f\times g)_0,\cdots, (f\times g)_{mn-1}\right \},
$$ where $(f\times g)_i\in\{f_j\times g_k\:|\:0\leq j\leq m-1, 0\leq k\leq n-1\}$, and $(f\times g)(x,y)=(f(x),g(y))$ for any $f\times g\in G\times H$ and $x\in X$, $y\in Y$. A metric $d_{X\times Y}$ on the product space $X\times Y$ is given by 
$$
d_{X\times Y}\left ((x_1, y_1), (x_2, y_2)\right ) =\max \left \{{d(x_1, x_2), d_Y(y_1, y_2)}\right \}.
$$

\hspace{4mm}
For any $v=v_0\cdots v_{r-1}\in F^+_{mn}$, there exist unique $w^{(1)}=i_0\cdots i_{r-1}\in F^+_{m}$ and unique $w^{(2)}=j_0\cdots j_{r-1}\in F^+_{n}$ such that $(f\times g)_{v_l}=f_{i_l}\times g_{j_l}$ for any $0 \leq l \leq r-1$ and thus $(f\times g)_v=f_{w^{(1)}}\times g_{w^{(2)}}$. On the other hand, if $w^{(1)}=i_0\cdots i_{r-1}\in F^+_{m}$, $w^{(2)}=j_0\cdots j_{r-1}\in F^+_{n}$, there exists unique $v=v_0\cdots v_{r-1}\in F^+_{mn}$ such that $f_{i_l}\times g_{j_l}=(f\times g)_{v_l}$ for any $0 \leq l \leq r-1$ and thus $f_{w^{(1)}}\times g_{w^{(2)}}=(f\times g)_v$. Thus, the map $ v\to (w^{(1)}, w^{(2)})$ is an one-to-one correspondence.

\hspace{4mm}
For $k\in\mathbb{N}$,  we denote
$$
G^k:=\left \{f_w\;|\;w\in F_m^+,|w|=k\right \}:=\left \{(f)_0,(f)_1,\cdots,(f)_{m^k-1}\right \},	
$$
that is, $G^k$ denotes the free semigroup generated by $\{\;f_w \;| \;w\in F_m^+,|w|=k\}$. For any $u=u_0\cdots u_{r-1}\in F_{m^k}^+$, there exists unique $w^{(0)},\cdots, w^{(r-1)}\in F^+_m$ with $|w^{(i)}|=k$ for all $0\leq i\leq r-1$ such that $(f)_{u_i}=f_{w^{(i)}}$ for all $0\leq i\leq r-1$ and thus $(f)_u=f_{w^{(0)}\cdots w^{(r-1)}}$. On the other hand, if $w^{(0)},\cdots,w^{(r-1)}\in F_m^+$ with $|w^{(i)}|=k$ for all $0\leq i\leq r-1$, there exists unique $u=u_0\cdots u_{r-1}\in F_{m^k}^+$ such that $f_{w^{(i)}}=(f)_{u_i}$ for all $0\leq i\leq r-1$, this implies that $f_{w^{(0)}\cdots w^{(r-1)}}=(f)_u$. Consequently, the map $u\to w^{(0)}\cdots w^{(r-1)}$ is an one-to-one correspondence.

\begin{proposition}
	\label{5.1}
	$(X,G)$ and $(Y,H)$ are chain recurrence if and only if
	$(X\times Y,G\times H)$ is chain recurrence. 
	If $(X,G)$ and $(Y,H)$ both are chain recurrence. Then for all $\varepsilon>0$, $x\in X$, and $y\in Y$, 
	\begin{enumerate}[(1)]
		\item \label{5.11}$r_\varepsilon((x,y),G\times H)$$\geq \max\{r_\varepsilon(x,G), r_\varepsilon(y,H)\}$;
		\item \label{5.12}$r_\varepsilon((x,y),G\times H)$$\leq \mathop{lcm} (r_\varepsilon(x,G), r_\varepsilon(y,H))$,	
	\end{enumerate}
	where $\mathop{lcm}(\cdot)$ denotes the least common multiple.
\end{proposition}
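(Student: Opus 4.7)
The plan is to exploit the one-to-one correspondence (established in the paragraph preceding the proposition) between words $v \in F_{mn}^+$ and pairs $(w^{(1)}, w^{(2)}) \in F_m^+ \times F_n^+$ of equal length, together with the factorization $(f \times g)_{v_l} = f_{i_l} \times g_{j_l}$. Since $d_{X \times Y}$ is the max metric, projecting a $(v, \varepsilon)$-chain in $X \times Y$ onto either factor yields a $(w^{(1)}, \varepsilon)$- or $(w^{(2)}, \varepsilon)$-chain of the same length; conversely, a pair of chains of equal length in the two factors braids coordinate-wise into a $(v, \varepsilon)$-chain in the product.

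For the $(\Leftarrow)$ direction of the equivalence, I would fix $x \in X$ and $\varepsilon > 0$, pick any $y \in Y$, and use chain recurrence of $(x, y)$ in $(X \times Y, G \times H)$ to produce a $(v, \varepsilon)$-chain $((x_i, y_i))_{i=0}^r$ from $(x, y)$ to itself; the projection $(x_i)_{i=0}^r$ is then a $(w^{(1)}, \varepsilon)$-chain from $x$ to itself, so $(X, G)$ is chain recurrent, and symmetry handles $(Y, H)$. This same projection argument immediately gives inequality (1): the length $r$ of any such $v$-chain is at least both $r_\varepsilon(x, G)$ and $r_\varepsilon(y, H)$, hence at least their maximum.

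For the $(\Rightarrow)$ direction together with inequality (2), assume $(X, G)$ and $(Y, H)$ are chain recurrent, fix $(x, y)$ and $\varepsilon > 0$, and set $p = r_\varepsilon(x, G)$, $q = r_\varepsilon(y, H)$, $L = \mathrm{lcm}(p, q)$. Choose an optimal $(w^{(1)}, \varepsilon)$-chain from $x$ to itself of length $p$ and an optimal $(w^{(2)}, \varepsilon)$-chain from $y$ to itself of length $q$, and concatenate each with itself $L/p$ and $L/q$ times respectively to obtain chains of common length $L$ with words $\tilde w^{(1)} = (w^{(1)})^{L/p}$ and $\tilde w^{(2)} = (w^{(2)})^{L/q}$. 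The word correspondence then supplies $v \in F_{mn}^+$ with $|v| = L$ whose coordinate-pair chain witnesses $r_\varepsilon((x, y), G \times H) \leq L$. No step is delicate: the only real observations are that the max metric transfers the $\varepsilon$-bound to both coordinates at once, and that concatenating loop-chains at $x$ (and at $y$) remains a valid chain because the endpoints match, which is precisely what forces the common length to be $\mathrm{lcm}(p, q)$ rather than merely $\max(p, q)$.
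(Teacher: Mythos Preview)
Your proposal is correct and follows essentially the same approach as the paper: project product chains onto the factors for the $(\Leftarrow)$ direction and inequality (1), and for $(\Rightarrow)$ together with inequality (2) concatenate optimal loop-chains in each factor to a common length $\mathrm{lcm}(p,q)$ and braid them via the word correspondence $v \leftrightarrow (w^{(1)}, w^{(2)})$. The paper's proof is marginally less explicit about taking the chains of minimal length $p$ and $q$ when establishing (2), but the argument is the same.
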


\begin{proof}
	Observe that  if $((x_0,y_0),\cdots,(x_k,y_k))$ is a $(v,\varepsilon)$-chain with $v=v_0\cdots v_{k-1}\in F^+_{mn}$ for $G\times H$, we have $(x_0,\cdots,x_k)$ is a $(w^{(1)},\varepsilon)$-chain with  some $w^{(1)}\in F^+_{m}$ for $G$, and $(y_0,\cdots,y_k)$ is a  $(w^{(2)},\varepsilon)$-chain with some $w^{(2)}\in F^+_{n}$ for $H$. This shows statement (\ref{5.11}), and $(X,G)$ and $(Y,H)$ both are chain recurrence if $(X\times Y,G\times H)$ is.
	
	\hspace{4mm}
	If $(X,G)$ and $(Y,H)$ both are chain recurrence, let $(x_0=x,\cdots,x_k=x)$ be  a $(w^{(1)},\varepsilon)$-chain with  some $w^{(1)}\in F^+_{m}$ for $G$,  and $(y_0=y,\cdots,y_s=y)$ be a  $(w^{(2)},\varepsilon)$-chain with some $w^{(2)}\in F^+_{n}$ for $H$. Then the $(w^{(1)}\cdots w^{(1)},\varepsilon)$-chain	
	$$
	{(x_0,\cdots,x_k=x_0,x_1,\cdots,x_k=x_0,\cdots,x_k=x_0)}
	$$
	formed by concatenating $(x_0=x,\cdots,x_k=x)$ with itself $\frac{s}{gcd(k,s)}$ times has length $lcm(k,s)$, where $gcd(\cdot)$ denotes the greatest common divisor. As does the $(w^{(2)}\cdots w^{(2)},\varepsilon)$-chain 
	$$
	\left (y_0,\cdots,y_s=y_0,y_1\cdots,y_s=y_0,\cdots,y_s=y_0\right )
	$$
	formed by concatenating $(y_0=y,\cdots,y_s=y)$ with itself $\frac{k}{gcd(k,s)}$ times has length $lcm(k,s)$. Combining the two gives a $(v,\varepsilon)$-chain from $(x,y)$ to itself of $G\times H$ for some $v\in F^+_{mn}$. This shows (\ref{5.12}) and $(X\times Y,G\times H)$ is chain recurrence.
\end{proof}
\begin{proposition}\label{5.2}
	Let $k\in\mathbb{N}$. 	$(X,G)$ is chain recurrence if and only if $(X,G^k)$ is chain recurrence. 
	If $(X,G)$ is chain recurrence. Then for all $\varepsilon>0$, $x\in X$, 	
	\begin{enumerate}[(1)]
		\item \label{(3)}$r_\varepsilon(x,G^k)\geq \frac{1}{k} r_\varepsilon(x,G)$;
		\item \label{(4)}there exists an $\varepsilon'\leq\varepsilon$ such that $r_\varepsilon(x,G^k)\leq  r_{\varepsilon'}(x,G)$.
	\end{enumerate}
\end{proposition}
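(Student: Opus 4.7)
The plan is to exploit the one-to-one correspondence between words $u \in F_{m^k}^+$ of length $r$ and $r$-tuples of length-$k$ words $(w^{(0)}, \ldots, w^{(r-1)})$ in $F_m^+$ recalled just before the statement, together with uniform continuity of $f_0, \ldots, f_{m-1}$ on the compact $X$. I will prove the two inequalities by explicit chain constructions; the equivalence of chain recurrence will drop out as immediate corollaries of those constructions.

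For inequality (1), I would take a $(u, \varepsilon)$-chain $(y_0 = x, y_1, \ldots, y_r = x)$ of $G^k$ realizing $r = r_\varepsilon(x, G^k)$ and write $(f)_{u_j} = f_{w^{(j)}}$ with $|w^{(j)}| = k$. Refine it into a $G$-chain of length $kr$ block-by-block: within the $j$-th block, set $z_{jk} = y_j$, apply the first $k-1$ letters of $w^{(j)}$ iteratively to $y_j$ to define $z_{jk+1}, \ldots, z_{jk+k-1}$ exactly, and set $z_{(j+1)k} = y_{j+1}$. The first $k-1$ transitions in each block are exact, while the final transition has error $d(f_{w^{(j)}}(y_j), y_{j+1}) \leq \varepsilon$ by the original chain condition. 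This produces a $(w^{(0)} \cdots w^{(r-1)}, \varepsilon)$-chain of $G$ from $x$ back to $x$ of length $kr$, hence $r_\varepsilon(x, G) \leq k\, r_\varepsilon(x, G^k)$. In particular, if $x$ is chain recurrent for $G^k$ then $x$ is chain recurrent for $G$.

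For inequality (2), I would first pick $\varepsilon' \leq \varepsilon$ with the property that every $(v, \varepsilon')$-chain of $G$ of length $k$ ends $\varepsilon$-close to $f_v$ applied to its initial point. This is a finite induction on the common modulus of uniform continuity $\omega$ of the generators: the accumulated error $E_j$ after $j$ steps satisfies $E_0 = 0$ and $E_{j+1} \leq \omega(E_j) + \varepsilon'$, which tends to $0$ as $\varepsilon' \to 0$, so one chooses $\varepsilon'$ so small that $E_k \leq \varepsilon$. Then take a $(w, \varepsilon')$-chain of $G$ from $x$ to itself of length $n = r_{\varepsilon'}(x, G)$, and concatenate $k$ copies of this loop (allowed because the chain closes up at $x$) to get a $(ww\cdots w, \varepsilon')$-chain of length $kn$, a length divisible by $k$. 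Splitting into $n$ consecutive length-$k$ blocks and keeping only the block endpoints yields $(y_0 = x, \ldots, y_n = x)$ with $d(f_{v^{(j)}}(y_j), y_{j+1}) \leq \varepsilon$ by the key lemma, so under the bijection this is a $(u, \varepsilon)$-chain of $G^k$ of length $n$. Hence $r_\varepsilon(x, G^k) \leq n = r_{\varepsilon'}(x, G)$, and in particular $x$ chain recurrent for $G$ forces $x$ chain recurrent for $G^k$.

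The main obstacle, I expect, is the careful choice of $\varepsilon'$ in (2): collapsing $k$ consecutive single-generator steps into a single application of the composition $f_v$ compounds the one-step slack through $k$ nested invocations of the modulus of continuity, so $\varepsilon'$ must be small enough that the iterated bound $E_k$ still fits under $\varepsilon$. Once that lemma is established, the non-divisibility of $r_{\varepsilon'}(x, G)$ by $k$ is handled cleanly by the $k$-fold concatenation trick available precisely because the underlying object is a recurrence loop at $x$, and both the two stated inequalities and the equivalence of chain recurrence between $(X, G)$ and $(X, G^k)$ follow immediately.
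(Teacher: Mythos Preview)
Your proposal is correct and matches the paper's argument essentially line for line: the same refinement by inserting exact orbit segments for (1), and the same uniform-continuity choice of $\varepsilon'$ followed by $k$-fold concatenation of the $G$-loop and collapsing to block endpoints for (2). One cosmetic point to watch when you write it up: with the paper's conventions ($f_w=f_{i_0}\cdots f_{i_{k-1}}$ and a $(w,\varepsilon)$-chain applying the letters of $w$ left to right), the refined $G$-chain in (1) carries the word $\overline{w^{(0)}}\cdots\overline{w^{(r-1)}}$ rather than $w^{(0)}\cdots w^{(r-1)}$.
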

\begin{proof}
	Suppose that  $(x_0,\cdots,x_l)$ is an $(u,\varepsilon)$-chain for $G^k$ with $u=u_0\cdots u_{l-1}\in F^+_{m^k}$. There are $w^{(0)},\cdots,w^{(l-1)}\in F_m^+$ such that $(f)_{u_j}=f_{w^{(j)}}$ for all $j=0,\cdots,l-1$. Let $w^{(j)}=i_{k-1}^{(j)}\cdots i_{0}^{(j)}\in F^+_m$ for $j=0,\cdots,l-1$. Then we insert the orbit $(f_{i_{0}^{(j)}}(x_j),\cdots,f_{i_{k-2}^{(j)}\cdots i_{0}^{(j)}}(x_j))$ between $x_j$ and $x_{j+1}$, for all $j=0,\cdots,l-1$. That is,
	$$
	\left (x_0,f_{i_{0}^{(0)}}(x_0),\cdots,f_{i_{k-2}^{(0)}\cdots i_{0}^{(0)}}(x_0), x_1,f_{i_{0}^{(1)}}(x_1),\cdots,f_{i_{k-2}^{(l-1)}\cdots i_{0}^{(l-1)}}(x_{l-1}),x_{l}\right ).
	$$
	It is clear that this is a $(w,\varepsilon)$-chain of length $lk$ for $G$, where $w= \overline{ w^{(0)}}\cdots \overline{w^{(l-1)}} \in F_m^+$. This last fact shows statement (\ref{(3)}), and proves that $G$ is chain recurrence if $G^k$ is.

    \hspace{4mm}
    Next, we show prove that $G^k$ is chain recurrence if $G$ is. By uniform continuity of $f_0,\cdots,f_{m-1}$, there is an $\varepsilon '<\varepsilon/k$  such that any $(w,\varepsilon')$-chain of length $|w|=k$ for $G$, $(x_0,\cdots,x_k)$, we have $d(f_{\overline{w}} (x_0),x_k)<\varepsilon$. For $x\in X$, we suppose that $(x_0=x,\cdots,x_{lk}=x)$ is a $(w',\varepsilon')$-chain of length $|w'|=lk$ of $G$ where $w'=i'_0\cdots i'_{lk-1}\in F^+_m$. We deduce that 
    $$
    d\left (f_{\overline{i'_{jk}\cdots i'_{{(j+1)}k-1}}}(x_{jk}),x_{{(j+1)}k}\right )<\varepsilon
    $$
    for all $0\leq j\leq l-1$. Moerover, there is $u=u_0\cdots u_{l-1}\in F^+_{m^k}$ such that $(f)_{u_j}=f_{\overline{i'_{jk}\cdots i'_{{(j+1)}k-1}}}$ for all $0\leq j\leq l-1$. Consequently, $(x_0,x_k,x_{2k},\cdots,x_{lk})$ is a $(u,\varepsilon)$-chain of length $|u|=l$ of $G^k$. 
    
    \hspace{4mm}
    Analogously, observe that by taking a $(w',\varepsilon')$-chain of length $|w'|=l$ for $G$ from $x$ to itself, and concatenating it with itself $k$ times, we can get a $(v,\varepsilon)$-chain of length $|v|=l$ for $G^k$ from $x$ to $x$ where $v\in F^+_{m^k}$. This shows (\ref{(4)}).			
\end{proof}

\begin{proposition}
	\label{4.8}
	Let $(X,G)$ and $(Y,H)$ be chain mixing, and $k\in\mathbb{N}$. Then $(X,G^k)$ and $(X\times Y,G\times H)$ are also chain mixing, and for all $\varepsilon>0$, 
	\begin{enumerate}[(1)]
		\item\label{(2.1)} $m_\varepsilon(\delta,G\times H)$$= \max\{m_\varepsilon(\delta,G), m_\varepsilon(\delta,H)\}$;
		\item\label{(2.2)} $m_\varepsilon(\delta,G^k)\geq \frac{1}{k} m_\varepsilon(\delta,G)$;
		\item \label{(2.3)}there exists an $\varepsilon'\leq\varepsilon$ such that $m_\varepsilon(\delta,G^k)\leq  m_{\varepsilon'}(\delta,G)$,	
	\end{enumerate}
\end{proposition}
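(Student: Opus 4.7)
The plan follows the template of Propositions~5.1 and 5.2, adapted to the chain mixing time, leveraging the one-to-one correspondences between word spaces set up immediately before Proposition~5.1. For part~(1), I would exploit the bijection $v\leftrightarrow(w^{(1)},w^{(2)})$ between $F^+_{mn}$ and length-matched pairs in $F^+_m\times F^+_n$, together with the fact that the product-metric ball $B((x,y),\delta)$ under $d_{X\times Y}$ factors as $B(x,\delta)\times B(y,\delta)$. For the $\geq$ direction, I project any $(v,\varepsilon)$-chain in $G\times H$ coordinatewise, obtaining a $(w^{(1)},\varepsilon)$-chain of the same length in $G$ and a $(w^{(2)},\varepsilon)$-chain in $H$; since the starting-in-$\delta$-ball condition survives projection and the targets $x'$ and $y'$ may be chosen independently, $m_\varepsilon(\delta,G\times H)$ dominates both factor mixing times. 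For the $\leq$ direction, with $N=\max\{m_\varepsilon(\delta,G),m_\varepsilon(\delta,H)\}$, any $n\geq N$, and any target $(x',y')$, I select factor chains of length $n$ ending at $x'$ and $y'$ respectively and splice them coordinatewise via the correspondence into a $(v,\varepsilon)$-chain in $G\times H$ of length $n$ from $B((x,y),\delta)$ to $(x',y')$.

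For~(3), I follow Proposition~5.2(4) closely. By uniform continuity of $f_0,\ldots,f_{m-1}$, I choose $\varepsilon'\leq\varepsilon$ (depending on $k$) such that every $(w,\varepsilon')$-chain $(y_0,\ldots,y_k)$ of length $k$ in $G$ satisfies $d(f_{\overline w}(y_0),y_k)<\varepsilon$. Then for any $l$ with $lk\geq m_{\varepsilon'}(\delta,G)$, a $(w',\varepsilon')$-chain in $G$ of length $lk$ from $B(x,\delta)$ to $y$ subsamples at indices $0,k,2k,\ldots,lk$ to an $(u,\varepsilon)$-chain in $G^k$ of length $l$ from $B(x,\delta)$ to $y$, using the bijection between $u=u_0\cdots u_{l-1}\in F^+_{m^k}$ and $k$-blocks $w^{(0)}\cdots w^{(l-1)}\in F^+_m$. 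This exhibits $G^k$ as chain mixing and gives $m_\varepsilon(\delta,G^k)\leq\lceil m_{\varepsilon'}(\delta,G)/k\rceil\leq m_{\varepsilon'}(\delta,G)$.

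For~(2), I use the reverse expansion as in Proposition~5.2(3). An $(u,\varepsilon)$-chain $(x_0,\ldots,x_l)$ of length $l$ in $G^k$ from $B(x,\delta)$ to $y$, with $(f)_{u_j}=f_{w^{(j)}}$ and $w^{(j)}=i^{(j)}_{k-1}\cdots i^{(j)}_0$, expands by insertion of the partial orbits $f_{i^{(j)}_0}(x_j),f_{i^{(j)}_1 i^{(j)}_0}(x_j),\ldots,f_{i^{(j)}_{k-2}\cdots i^{(j)}_0}(x_j)$ between $x_j$ and $x_{j+1}$ into a $(w,\varepsilon)$-chain of length $lk$ in $G$ from $B(x,\delta)$ to $y$, where $w=\overline{w^{(0)}}\cdots\overline{w^{(l-1)}}$. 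Applying this for every $l\geq N_k:=m_\varepsilon(\delta,G^k)$ and every target $y$ produces $G$-chains from $B(x,\delta)$ to every $y$ for all lengths in $\{kN_k,k(N_k+1),k(N_k+2),\ldots\}$, from which the relation $kN_k\geq m_\varepsilon(\delta,G)$, i.e., the inequality in (2), follows via the minimality in the definition of $m_\varepsilon(\delta,G)$.

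The main obstacle I expect is the final step of~(2): the expansion construction only certifies a sparse arithmetic progression $\{kN_k,k(N_k+1),\ldots\}$ of $G$-chain lengths with the full $B(x,\delta)$-to-every-$y$ property, whereas $m_\varepsilon(\delta,G)$ is defined as the least threshold past which \emph{every} length admits such chains. Bridging this gap will likely require an auxiliary concatenation supplying the missing residue classes modulo $k$, exploiting chain mixing of $G$ itself (e.g., by running a $G^k$-expanded chain to an intermediate point and then a short $G$-chain of the appropriate residual length to the target); the remaining verifications (the word-space bijections, the product-metric identity for $B((x,y),\delta)$, and preservation of the start-in-ball condition under projection and splicing) are routine adaptations of the steps already carried out in the proofs of Propositions~5.1 and 5.2.
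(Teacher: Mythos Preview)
Your approach matches the paper's exactly: the paper's entire proof is the one-line remark that (1) follows from the definition via the product structure and that (2)--(3) are ``analogous to those of the corresponding statements for chain recurrence in Proposition~\ref{5.2}.'' Your expansions of (1) and (3) are precisely the intended arguments, and for (3) you even extract the sharper bound $m_\varepsilon(\delta,G^k)\le\lceil m_{\varepsilon'}(\delta,G)/k\rceil$.

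On (2) you have in fact been more careful than the paper. You are right that the naive transposition of Proposition~\ref{5.2}(\ref{(3)}) only yields $G$-chains of lengths in the progression $kN_k,\,k(N_k{+}1),\ldots$, which does not by itself force $m_\varepsilon(\delta,G)\le kN_k$; the quantifier structure of the mixing time (``for every $n\ge N$'') differs from that of the recurrence time (``there exists one $n$''). Your proposed fix is correct and can be made precise as follows: given $n=k\ell+r$ with $\ell\ge N_k$ and $0\le r<k$, first take any $\varepsilon$-chain of length $r$ ending at the target $y$ (such a chain exists because chain mixing of $G$ supplies long $\varepsilon$-chains ending at $y$, and one simply keeps the last $r$ steps), call its initial point $p_0$; then use chain mixing of $G^k$ to obtain a $G^k$-chain of length $\ell$ from $B(x,\delta)$ to $p_0$, expand it to a $G$-chain of length $k\ell$, and concatenate. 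The concatenation is legitimate because the two pieces share the exact point $p_0$, so no extra $\delta$-jump is introduced. This fills the residue classes and gives $m_\varepsilon(\delta,G)\le kN_k$ as claimed.
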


\begin{proof}
	The fact that $G\times H$ is chain mixing if and only if both $G$ and $H$ are follow from the definition of chain mixing, thus we have statement (\ref{(2.1)}). The proof of (\ref{(2.2)}), (\ref{(2.3)}) and the fact that $G^k$ is chain mixing are analogous to those of the corresponding statements for chain recurrence in Proposition \ref{5.2}.
\end{proof}
\begin{remark}
	If $G=\{f\}$, then Proposition 24 and 26 of \cite{RD} are obtained by Proposition \ref{5.1}, \ref{5.2} and \ref{4.8}.
\end{remark}

\begin{proof}[Proof of Theorem \ref{UBD}]
	Let $s$ be the upper box dimension of $\Sigma_m^+\times X$, by Theorem 7.5 of \cite{FK}, 
	$$\overline{\dim}_B(\Sigma_m^+\times X)\leq \overline{\dim}_B(\Sigma_m^+)+\overline{\dim}_B(X).$$
	Since $\overline{\dim}_B(\Sigma_m^+)=1$, we have that $s\leq \bar{b}+1$.
	It follows from \cite{WX} that $F$ is chain transitive if and only if $G$ is chain transitive. 
	Note that if $\big((\omega_0,x_0),\cdots,(\omega_n,x_n)\big)$
	is a $(n,\varepsilon)$-chain of $F$, then
	$(x_0,\cdots,x_n)$ is a $(w,\varepsilon)$-chain of $G$ for some $w\in F^+_m$ with $|w|=n$. Thus, we have 
	$$r_\varepsilon(x,G)\leq\max_{\omega\in\Sigma_m^+}r_\varepsilon\left ((\omega,x),F\right ).$$
	This gives us $r_\varepsilon(G)\leq r_\varepsilon (F)$. By \cite{RD}, there is a $C>0$ such that $r_\varepsilon (F)\leq C/{\varepsilon^{s}}$, we deduce that 
	$$
	r_\varepsilon(G)\leq  C/{\varepsilon^{\bar{b}+1}}.
	$$ This shows (1).
	
	\hspace{4mm}
	Let $(\omega,x)\in \Sigma^+_m\times X$ and $m_\varepsilon((\omega,x),\delta,F)=N$. Since $F$ is chain mixing if and only if $G$ is chain mixing by \cite{WX}, this shows that  
	for any $(\xi,y)\in\Sigma^+_m\times X $,  there exist $(\zeta,x')\in B((\omega,x),\delta)$ and $\varepsilon$-chain of $F$ of length exactly $N$ from $(\zeta,x')$ to $(\xi,y)$, denoted by
	$$\big ((\zeta,x'), (\zeta_1,x_1),\cdots,(\zeta_N,x_N)=(\xi,y)\big ).
	$$
	It follows that $(x',x_1,\cdots,x_N=y)$ is a $(w,\varepsilon)$-chain for some $w\in F^+_m$ with $|w|=N$. We deduce that 
	$$m_\varepsilon(x,\delta,G)\leq\max_{\omega\in\Sigma_m^+}m_\varepsilon\left ((\omega,x),\delta,F\right ).$$
	This implies that $m_\varepsilon(\delta,G)\leq m_\varepsilon(\delta,F)$. By \cite{RD}, there is a $C>0$ such that $m_\varepsilon (\delta,F)\leq C/{\varepsilon^{2s}}$, we conclude that
	$$m_\varepsilon(\delta,G)\leq C/{\varepsilon^{2(\bar{b}+1)}}.$$ This shows (2).
\end{proof}

\begin{proof}[Proof of Theorem \ref{LBM}]
	By Theorem \ref{CE} we know,
	$$
	h(G)=\lim_{\varepsilon\to 0}\lim_{\delta\to 0}\limsup_{n\to\infty}\frac{1}{n}\log N^*(n,\varepsilon,\delta,G).
	$$  
	
	\hspace{4mm}
	For $\alpha>0$, let $N_{\alpha}:=N^*(0,\alpha,0,G)$. Let $\{x_1,\cdots,x_{N_{3\delta}}\}$ be a $3\delta$-separated set of points. Each sequence $(i_0,\cdots,i_k)\in\{1,\cdots,N_{3\delta}\}^{k+1}$ corresponds to $\{x_{i_0},\cdots,x_{i_k}\}$. Taking $x'_{i_k}=x_{i_k}$, by the definition of $m_\varepsilon(\delta)$, there are $x'_{i_{k-1}}\in B(x_{i_{k-1}},\delta)$ and $(w^{(k-1)},\varepsilon)$-chain of length $m_\varepsilon(\delta)$ for some $w^{(k-1)}\in F^+_m$ from $x'_{i_{k-1}}$ to $x'_{i_k}$. Now, suppose that this process continues until,  
	there are $x_{i_0}'\in B(x_{i_0},\delta)$ and $(w^{(0)},\varepsilon)$-chain of length $m_\varepsilon(\delta)$ for some $w^{(0)}\in F^+_m$ from $x'_{i_0}$ to $x'_{i_1}$. We can derive that $(x'_{i_0},\cdots,x_{x_k}')$ is a $(w^{(0)}\cdots w^{(k-1)},\varepsilon)$-chain of length $km_\varepsilon(\delta)$. Since the set $\{x_1,\cdots,x_{N_{3\delta}}\}$ is $3\delta$-separated, the set
	$$
	E:=\left \{ (x'_{i_1},\cdots,x'_{i_k})\:\big|\:(i_0,\cdots,i_k)\in\{1,\cdots,N_{3\delta}\}^{k+1} \right \}
	$$
	is a set of $\delta$-separated $\varepsilon$-chains of length $km_\varepsilon (\delta)$. Obviously, $\sharp E=(N_{3\delta})^{k+1}$, then,
	$$
	N^*(km_\varepsilon (\delta),\varepsilon,\delta,G)\geq \dfrac{(N_{3\delta})^{k+1}}{m^{km_\varepsilon (\delta)}}.
	$$
	
	\hspace{4mm}
	Next, let $B_{\alpha}:=B^*(0,\alpha,0,G)$ be the minimum number of balls of radius $\alpha$ necessary to cover $X$. By the definition of lower box dimension, for small enough $\alpha$, $B_{\alpha}\geq C(1/\alpha)^{\underline{b}}$ for some positive constant $C$. Clearly $N_{\alpha}\geq  B_{\alpha}$, then $N_{\alpha}\geq C(1/\alpha)^{\underline{b}}$ as well.
	
	\hspace{4mm}
	Finally, we have that
	$$
	\begin{aligned}
		h(G)&=\lim_{\delta\to 0}\lim_{\varepsilon\to 0}\limsup_{n\to\infty}\frac{1}{n}\log N^*(n,\delta,\varepsilon,G)\\
		&\geq\limsup_{\delta\to 0}\lim_{\varepsilon\to 0}\limsup_{k\to\infty}\frac{1}{km_\varepsilon (\delta)}\log N^*(km_\varepsilon (\delta),\delta,\varepsilon,G)\\
		&\geq\limsup_{\delta\to 0}\lim_{\varepsilon\to 0}\limsup_{k\to\infty}\frac{1}{km_\varepsilon (\delta)}\log{\frac{(N_{3\delta})^{k+1}}{m^{km_\varepsilon (\delta)}}}\\
		&=\limsup_{\delta\to 0}\lim_{\varepsilon\to 0}\frac{\log N_{3\delta}}{m_\varepsilon (\delta)}-\log m\\
		&\geq \limsup_{\delta\to 0}\lim_{\varepsilon\to 0}\frac{\log C(1/3\delta)^{\underline{b}}}{m_\varepsilon (\delta)}-\log m\\
		&= {\underline{b}}\cdot\limsup_{\delta\to 0}\dfrac{\log (1/\delta)}{\lim_{\varepsilon\to 0 }m_\varepsilon(\delta)}-\log m.
	\end{aligned}
	$$
	
	\hspace{4mm}
	Since $X$ is not a single point, and it cannot be a finite collection of points as $(X,G)$ is chain mixing, then       $\lim_{\delta\to 0}m_\varepsilon(\delta)=\infty$. Thus this conclusion is proved. 
\end{proof}
\begin{remark}
	In fact, by virtue of Theorem \ref{sp} and Theorem 28 of \cite{RD}, we immediately deduce that
	$$\begin{aligned}
		h(G)\geq \max\left \{0,\;{\underline{b}}\cdot\limsup_{\delta\to 0}\dfrac{\log (1/\delta)}{\lim_{\varepsilon\to 0 }m_\varepsilon(\delta, F)}-\log m\right \}.
	\end{aligned}$$

\hspace{4mm}
As $m_\varepsilon(\delta,G)\leq m_\varepsilon(\delta,F)$ by Theorem \ref{UBD}, we conclude that the estimation of the topological entropy of free semigroup actions is more accurate in Theorem \ref{LBM}.

\hspace{4mm}
On the other hand, if $m=1$, this means that $G=\{f\}$, then 
	$$\begin{aligned}
	h(f)\geq {\underline{b}}\cdot\limsup_{\delta\to 0}\dfrac{\log (1/\delta)}{\lim_{\varepsilon\to 0 }m_\varepsilon(\delta, f)},
\end{aligned}$$
this yields that Theorem 28 in \cite{RD}.

\end{remark}

\section{The structure of chain transitive systems}\label{chain}

In this section our mainly purpose is to prove Theorem \ref{EC}.
The proof is rather long and technical, so several auxiliary results and definitions are needed.

\hspace{4mm}
Let $(X,d)$ be a compact metric space and $G$ be a free semigroup generated by $m$ generators $f_0,f_1,\cdots,f_{m-1}$  which are continuous maps on $X$.  

\hspace{4mm}
We assume $G$ is chain transitive in this section. For $x\in X$, denote by $T_\varepsilon(x)$  the set of the lengths of all $(w,\varepsilon)$-chain from $x$ to itself with some $w\in F_m^+$ for $G$. Recall that $gcd(\cdot)$ denotes the greatest common divisor.
\begin{lem}
	\label{GCD}
	Let $G$ be chain transitive and $\varepsilon>0$. There exists $k_\varepsilon\geq 1$ such that $gcd(T_\varepsilon(x))=k_\varepsilon$ for any $x\in X$, in the sence that $k_\varepsilon$ does not depend on the choice of $x$.
\end{lem}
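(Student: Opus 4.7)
The plan is to show that for any two points $x, y \in X$, $\gcd(T_\varepsilon(x))$ divides every element of $T_\varepsilon(y)$; then by symmetry the two gcds coincide, and the common value $k_\varepsilon$ will be the desired integer. Note first that $T_\varepsilon(x)\neq\emptyset$ for every $x$: chain transitivity (with $y=x$) produces a $(w,\varepsilon)$-chain from $x$ to itself, so the gcd is well-defined.

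The core construction is a concatenation lemma for pseudo-chains. Given a $(w_1,\varepsilon)$-chain $(x_0=x,\ldots,x_a=y)$ with $w_1=i_0\cdots i_{a-1}$ and a $(w_2,\varepsilon)$-chain $(y_0=y,\ldots,y_b=z)$ with $w_2=j_0\cdots j_{b-1}$, the concatenated sequence $(x_0,\ldots,x_a=y_0,y_1,\ldots,y_b)$ is a $(w_1 w_2,\varepsilon)$-chain from $x$ to $z$ of length $a+b$, since the defining inequalities at the join and elsewhere remain unchanged. I will use this silently throughout.

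Now fix $x,y\in X$. By chain transitivity applied to $\varepsilon$, there exist $w,w'\in F_m^+$ together with a $(w,\varepsilon)$-chain from $x$ to $y$ of some length $a$, and a $(w',\varepsilon)$-chain from $y$ to $x$ of some length $b$. Concatenating these two gives a $(ww',\varepsilon)$-chain from $x$ to itself of length $a+b$, so $a+b\in T_\varepsilon(x)$. Next, for any $n\in T_\varepsilon(y)$, pick a $(u,\varepsilon)$-chain from $y$ to $y$ of length $n$; concatenating the three pieces (from $x$ to $y$, then $y$ to $y$, then $y$ to $x$) yields a $(wuw',\varepsilon)$-chain from $x$ to itself of length $a+n+b$. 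Hence both $a+b$ and $a+n+b$ lie in $T_\varepsilon(x)$, so $\gcd(T_\varepsilon(x))$ divides their difference $n$. Since $n\in T_\varepsilon(y)$ was arbitrary, $\gcd(T_\varepsilon(x))\mid \gcd(T_\varepsilon(y))$. Exchanging the roles of $x$ and $y$ gives the reverse divisibility, and equality follows. Setting $k_\varepsilon$ to be this common value completes the proof.

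There is no substantial obstacle here; the only point requiring care is the bookkeeping of the words $w_1,w_2$ under concatenation, which is handled once by the remark above. The argument uses chain transitivity only through the existence of chains between pairs of points, so it applies verbatim in the free-semigroup setting just as it does for a single map.
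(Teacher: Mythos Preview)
Your proof is correct and follows essentially the same approach as the paper: you build chains from $x$ to $y$, from $y$ to $y$, and from $y$ to $x$, concatenate, and deduce that $\gcd(T_\varepsilon(x))$ divides the difference $(a+n+b)-(a+b)=n$. The paper's argument is identical in structure; your version is in fact slightly more explicit about non-emptiness of $T_\varepsilon(x)$ and about invoking symmetry for the reverse divisibility.
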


\begin{proof}
	This follows from the proof of Lemma 7 in \cite{RD}. For $x\in X$, define $k_\varepsilon:=gcd(T_\varepsilon(x))$. Consider that $y\in X$ and $(y_0=y, y_1,\cdots,y_{n}=y)$ is a $(w,\varepsilon)$-chain from $y$ to itself of length $|w|=n$. We claim that $k_\varepsilon$ divides $n$. Indeed, as $G$ is chain transitive, there are $(w',\varepsilon)$-chain $(x_0=x,x_1,\cdots,x_{m_1}=y)$ from $x$ to $y$ of length $|w'|=m_1$, and $(w'',\varepsilon)$-chain $(z_0=y,z_1,\cdots,z_{m_2}=x)$ from $y$ to $x$ of length $|w''|=m_2$. We have that
	$$
	(x_0=x,x_1,\cdots,x_{m_1}=y,z_1,\cdots,z_{m_2}=x)
	$$
	is a $(w'w'',\varepsilon)$-chain from $x$ to itself of length $m_1+m_2$, and
	$$
	(x_0=x,x_1,\cdots,x_{m_1}=y,y_1,\cdots,y_{n}=y,z_1,\cdots,z_{m_2}=x)
	$$
	is a $(w'ww'',\varepsilon)$-chain from $x$ to itself of length $m_1+n+m_2$. Note that both $m_1+m_2$ and $m_1+n+m_2$ are multiples of $k_\varepsilon$, this yields that $k_\varepsilon$ divides $n$.
\end{proof}

\hspace{4mm}
Define a relation on $X$ by setting $x\thicksim_\varepsilon y$ if there is a $(w,\varepsilon)$-chain from $x$ to $y$ of length a multiple of $k_\varepsilon$ for some $w\in F^+_m$. It is clear that $\thicksim_\varepsilon$ is an equivalence relation. In fact, observe that reflexivity and transitivity are established by Lemma \ref{GCD} and $G$ is chain transitive. On the other hand, we claim that $\thicksim_\varepsilon$ is symmetry, that is,  $y\thicksim_\varepsilon x$ if $x\thicksim_\varepsilon y$. Indeed, consider that  $(y_0=x,\cdots,y_{n}=y)$ is a $(w',\varepsilon)$-chain with $|w|=n$ and $k_\varepsilon\big| n$ from $x$ to $y$. Let we suppose that $(x_0=y,\cdots,x_l=x)$ is a $(w'',\varepsilon)$-chain  with $|w''|=l$ from $y$ to $x$ as $G$ is chain transitive. Then $(y_0=x,\cdots,y_{n}=y, \cdots,x_l=x)$ is a $(w'w'',\varepsilon)$-chain from $x$ to $x$ of length $n+l$. This implies that $l$ is a multiple of $k_\varepsilon$. Consequently, $\thicksim_\varepsilon$ is an equivalence relation.

\begin{remark}
	In fact, by the definition of $k_\varepsilon$, if $x \thicksim_\varepsilon y$, then any $\varepsilon$-chain from $x$ to $y$ must have length a multiple of $k_\varepsilon$. 
\end{remark}

\hspace{4mm}
We now define another equivalence relation on $X$ by saying $x\thicksim y$ if $x\thicksim_\varepsilon y$ for all $\varepsilon>0$.

\begin{lem}
	Let $G$ be chain transitive. For any $\varepsilon>0$, the equivalence relation $\thicksim_\varepsilon$ is both open and closed. The equivalence relation $\thicksim$ is closed.
\end{lem}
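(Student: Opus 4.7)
The plan is to prove openness of each $\thicksim_\varepsilon$-equivalence class first, deduce that there are only finitely many of them, then obtain closedness of $\thicksim_\varepsilon$ for free, and finally close $\thicksim$ by an intersection argument.

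For the openness, I would fix $y\in X$ and pick an auxiliary parameter $\varepsilon'\in(0,\varepsilon)$. Using that $G$ is chain transitive (with respect to $\varepsilon'$-chains), take a $(w,\varepsilon')$-chain $(y_0=y,y_1,\dots,y_n=y)$ from $y$ to itself for some $w=i_0\cdots i_{n-1}\in F_m^+$; since every $\varepsilon'$-chain is also an $\varepsilon$-chain, $n\in T_\varepsilon(y)$ and hence $k_\varepsilon\mid n$ by Lemma~\ref{GCD}. By uniform continuity of the finitely many generators $f_0,\dots,f_{m-1}$, choose $\delta>0$ with $\delta\leq \varepsilon-\varepsilon'$ such that $d(a,b)<\delta$ implies $d(f_i(a),f_i(b))<\varepsilon-\varepsilon'$ for every $0\leq i\leq m-1$. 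For any $y'\in B(y,\delta)$, both sequences $(y',y_1,\dots,y_n=y)$ and $(y=y_0,y_1,\dots,y_{n-1},y')$ are $(w,\varepsilon)$-chains of length $n$: the perturbed first step is bounded by $d(f_{i_0}(y'),f_{i_0}(y))+d(f_{i_0}(y),y_1)<(\varepsilon-\varepsilon')+\varepsilon'=\varepsilon$, and the perturbed last step by $d(f_{i_{n-1}}(y_{n-1}),y)+d(y,y')\leq \varepsilon'+\delta\leq \varepsilon$. Since both chains have length $n$, a multiple of $k_\varepsilon$, they witness $y\thicksim_\varepsilon y'$, so $B(y,\delta)\subseteq [y]_{\thicksim_\varepsilon}$.

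For closedness of $\thicksim_\varepsilon$, I would show the number of classes is exactly $k_\varepsilon$, hence finite. Fixing a basepoint $x\in X$, the length modulo $k_\varepsilon$ of any $(w,\varepsilon)$-chain from $x$ to $y$ depends only on $y$ (two such lengths differ by an element of $T_\varepsilon(x)$ via a round trip through $y$, by the argument in Lemma~\ref{GCD}), so there are at most $k_\varepsilon$ classes. Conversely, $T_\varepsilon(x)$ is closed under addition (concatenation of chains) with gcd $k_\varepsilon$, so it contains $Nk_\varepsilon$ for some large $N$, and the intermediate points of a corresponding chain from $x$ to itself realize all $k_\varepsilon$ residues. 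With the classes finite in number and each open by the previous step, each one is the complement of the finite union of the others, hence also closed; thus $\thicksim_\varepsilon$ is a finite union of products of clopen sets in $X\times X$, and is closed. Finally, since $\thicksim=\bigcap_{\varepsilon>0}\thicksim_\varepsilon$ as subsets of $X\times X$, it is an intersection of closed sets and therefore closed.

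The main obstacle is the openness step: the crux is the idea of starting with an $\varepsilon'$-chain from $y$ to itself with $\varepsilon'<\varepsilon$ rather than an $\varepsilon$-chain, so that the slack $\varepsilon-\varepsilon'$ is available to absorb a small endpoint perturbation without exceeding the $\varepsilon$-threshold; this simultaneously yields chains $y\to y'$ and $y'\to y$ of the same length $n$, which is the multiple of $k_\varepsilon$ required by the definition of $\thicksim_\varepsilon$. Once openness is established, closedness of $\thicksim_\varepsilon$ reduces to finiteness of classes, and closedness of $\thicksim$ is immediate.
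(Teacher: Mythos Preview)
Your proof is correct and follows the standard line of argument that the paper defers to (Lemma~9 of \cite{RD}); the paper itself gives no details beyond that citation. One minor remark: in your closedness step, the explicit count that there are exactly $k_\varepsilon$ classes is more than you need---once each class is open, compactness of $X$ already forces finitely many classes (a disjoint open cover of a compact space is finite), and closedness follows immediately. Your counting argument is not wrong, and in fact it anticipates what the paper establishes separately in the proof of Theorem~\ref{EC}, but it is not required for the lemma as stated.
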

\begin{proof}
	Similar to Lemma 9 of \cite{RD}, so we won't repeat it here. 
\end{proof}

\hspace{4mm}
The following result is essentially contained in Exercises 8.22 of \cite{MR1219737}, but we provide a proof here for the reader convenience. 

\hspace{4mm}
Let $T_1,T_2\subset \mathbb{N}$, and $T_1+T_2:=\{t_1+t_2\:|\:t_1\in T_1, t_2\in T_2\}$. $T\subset\mathbb{N}$ is closed under addition if $T+T\subset T$.

\begin{lem} 
	 Let $T\subset \mathbb{N}$ and  $T$ be close under addition, then there exists $N$ so that $nd\in T$ for all $n\geq N$ where $gcd(T)=d$. Furthermore, there exist $r,n\in T$ such that $gcd(r,n)=d$.
	\label{NT}	
\end{lem}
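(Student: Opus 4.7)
The plan is to establish the ``furthermore'' clause first and then use it to derive the main statement. For the ``furthermore'' clause, I would observe that the collection $\mathcal{D} = \{\gcd(t_1,\ldots,t_k) : k \geq 1,\, t_i \in T\}$ is a nonempty set of positive integers and so has a minimum $d^*$. Every element of $T$ already appears as a singleton gcd, so $d^*$ divides every element of $T$, giving $d^* \leq d$. Conversely $d = \gcd(T)$ divides each element of $T$, hence divides every finite subset gcd, forcing $d \leq d^*$. Therefore $d = d^*$, and there exist finitely many $t_1,\ldots,t_k \in T$ with $\gcd(t_1,\ldots,t_k) = d$. B\'ezout then provides integers $a_1,\ldots,a_k$ with $\sum_i a_i t_i = d$. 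Separating positive and negative coefficients yields $d = p - q$, where $p = \sum_{a_i > 0} a_i t_i$ and $q = \sum_{a_i < 0}(-a_i) t_i$; each $a_i t_i$ is a repeated sum of $t_i$ with itself, so closure under addition places $p$ and $q$ in $T$ (with the convention $q = 0$ if no $a_i$ is negative, in which case $d \in T$ directly).

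For the main statement I split into two cases. If the B\'ezout expression can be taken with $q = 0$, then $d \in T$, and closure under addition gives $nd \in T$ for every $n \geq 1$, so one may take $N = 1$. Otherwise $p, q \in T$ with $p - q = d$, and any common divisor of $p$ and $q$ divides $d$, while $d$ divides both $p$ and $q$; therefore $\gcd(p, q) = d$, proving the ``furthermore'' clause. Writing $a = p/d$ and $b = q/d$, we have $\gcd(a, b) = 1$, and the problem reduces to showing that every sufficiently large integer $n$ admits a representation $n = xa + yb$ with $x, y \in \mathbb{N}_0$: indeed such a representation gives $nd = xp + yq \in T$ by closure under addition. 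The classical two-variable Sylvester/Frobenius theorem supplies such a representation for every $n \geq (a-1)(b-1)$, so taking $N := (a-1)(b-1)$ completes the argument.

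The main obstacle, insofar as there is one, is the bookkeeping around B\'ezout and the degenerate case $q = 0$; everything else reduces to a standard numerical-semigroup fact. Once two elements of $T$ whose gcd is exactly $d$ are in hand, the Frobenius bound immediately places every large multiple of $d$ into $T$, which is precisely what the lemma asserts.
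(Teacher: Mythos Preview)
Your proof is correct and takes a different, more streamlined route than the paper's. The paper first locates finitely many elements $t_1,\ldots,t_p\in T$ whose gcd is $d$ (by repeatedly adjoining an element that lowers the running gcd), and then proves by induction on $p$ that every sufficiently large multiple of $d$ is a non-negative integer combination of $t_1,\ldots,t_p$; the base case $p=2$ is handled by an explicit division argument, and the inductive step reduces $p+1$ generators to a combination of the $p$-generator and two-generator cases. The ``furthermore'' clause is then an implicit corollary of the main claim (e.g.\ take $Nd$ and $(N+1)d$).

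You reverse the logic: by separating the positive and negative parts of a single B\'ezout relation you directly manufacture two elements $p,q\in T$ with $\gcd(p,q)=d$, establishing the ``furthermore'' clause first, and then one invocation of the two-variable Sylvester--Frobenius theorem (which the paper itself records as Lemma~\ref{AJ} and uses elsewhere) yields the main claim with no induction. Your argument is shorter and avoids the inductive machinery; the paper's argument has the minor virtue of being self-contained for the two-generator Frobenius bound rather than citing it. One small nit: in your case $q\neq 0$ the choice $N=(a-1)(b-1)$ can be zero (namely when $q=d$, so $b=1$), in which case $n=0$ gives an empty sum; take $N=\max\{1,(a-1)(b-1)\}$ to guarantee $nd$ is a genuine sum of elements of $T$.
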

\begin{proof}
	We only prove the case $d=1$, others are similar. We claim that $gcd(t_1,\cdots,t_p)=1$ for some $p\in\mathbb{N}$. Indeed, for any $t_1,t_2\in T$, $gcd(t_1,t_2)=c$. If $c>1$, there is $t_3\in T$ such that $gcd(c,t_3)=c_1$ with $c_1<c$. If $c_1=1$, then $gcd(t_1,t_2,t_3)=1$, otherwise, repeat the above steps. 
	 
	\hspace{4mm}
	To prove the lemma it is to show that there is $N$ such that $n=n_1t_1+\cdots n_pt_p$ for all $n\geq N$ where $n_1,\cdots,n_p\in\mathbb{N}_0$, as $T$ is closed under addition.
	We proceed by induction on $p$. For $p=2$, then $gcd(t_1,t_2)=1$, so $1=n_1t_1-n_2t_2$. If $n\geq N=n_2t_2^2$ then $n=Qt_2+R$ with $Q\geq n_2t_2$ and $0\leq R<t_2$. So $n=(Q-n_2R)t_2+Rn_1t_1.$  Assuming it holds for $p$ we prove it for $p+1$.
	Consider that $gcd(t_1,\cdots,t_{p+1})=1$, and $gcd(t_1,\cdots,t_p)=k$, then $gcd(k,t_{p+1})=1$. Denote 
	$$T_2=\left \{n_1k+n_2t_{p+1}\; \left | \;  n_1,n_2\in\right.\mathbb{N}_0\right \},$$
	and 
	$$
	T_p=\left\{ \left. r_1\frac{t_1}{k}+\cdots+r_p\frac{t_p}{k}\; \right|\;r_1,\cdots,r_p\in\mathbb{N}_0\right \}.
	$$
	Clearly, $T_2$ and $T_p$ are both closed under addition. By hypothesis, let $N_1$, with respect to $T_2$, be the number such that all $n\geq N_1$ implies $n=n_1k+n_2t_{p+1}$ for some $n_1,n_2\in \mathbb{N}_0$, and let $N_2$, with respect to $T_p$, be the number such that all $n\geq N_2$ implies 
	$$
	n=m_1\frac{t_1}{k}+\cdots+m_p\frac{t_p}{k}
	$$ 
	for some $m_1,\cdots,m_p\in\mathbb{N}_0$.
	Take $N=N_1+N_2k$. For all $n\geq N$, $n-N_2k=n_1k+n_2t_{p+1}$, this implies that 
	$$
	\begin{aligned}
	n&=n_1k+n_2t_{p+1}+N_2k\\
	&=(n_1+N_2)k+n_2t_{p+1}\\
	&= (m_1\frac{t_1}{k}+\cdots+m_p\frac{t_p}{k} )k+n_2t_{p+1}\\
	&=m_1t_1+\cdots+m_pt_p+n_2t_{p+1}.
	\end{aligned}$$
This completes our induction.
\end{proof}

\begin{definition}
	\cite{BL}
	Let $J=(j_1,j_2,\cdots)$ be a sequence of integers greater than or equal to $2$. Let $X_J$ be the Cantor set of all sequences $(a_1,a_2,\cdots)$ where $a_i\in\{0,1,\cdots,j_i-1\}$ for all $i$. Define the adding machine map $f_J:X_J\to X_J$ by $$f_J(a_1,a_2,\cdots)=(a_1,a_2,\cdots)+(1,0,0,\cdots),$$
	where addition is defined componentwise$\mod j_i$, with carrying to the right.
\end{definition}

\begin{lem}
	(\cite{AJ}, Th 2.1.3 ) Let $a_1$,$a_2$ be non-negative relatively prime integers. Let
	$$g(a_1,a_2)=a_1a_2-a_1-a_2,$$
	then, for any $N>g(a_1,a_2)$ is representable as a non-negative integer combination of $m$ and $n$, that is, there are $p_1,p_2\in\mathbb{N}_0$ such that $N=p_1a_1+p_2a_2$.
	\label{AJ}
\end{lem}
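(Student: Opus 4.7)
The plan is to exploit the hypothesis $\gcd(a_1,a_2)=1$ to put every integer into a canonical form modulo $a_1$, and then read off the Frobenius bound directly from that canonical form. The whole argument is elementary number theory; no dynamics is involved.

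First I would fix the canonical representation. Since $a_2$ is a unit modulo $a_1$, the values $\{0,a_2,2a_2,\ldots,(a_1-1)a_2\}$ hit each residue class mod $a_1$ exactly once. Hence for every $N\in\mathbb{Z}$ there is a unique $r\in\{0,1,\ldots,a_1-1\}$ with $N\equiv r\,a_2\pmod{a_1}$; writing $N=q\,a_1+r\,a_2$ then determines $q\in\mathbb{Z}$ uniquely. Next I would argue that $N$ admits a decomposition $N=p_1a_1+p_2a_2$ with $p_1,p_2\geq 0$ if and only if the canonical $q$ is non-negative. Indeed, every other integer representation has the form $(q-j\,a_2,\,r+j\,a_1)$ for some $j\in\mathbb{Z}$; taking $j>0$ forces $r+j\,a_1\geq a_1>a_1-1$, so the canonical $r$ is already the smallest non-negative choice, and the canonical $q$ is therefore the largest possible. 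If the canonical $q$ is negative, no reshuffling via the syzygy lattice $\{(j\,a_2,-j\,a_1):j\in\mathbb{Z}\}$ can restore non-negativity of both coefficients.

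Finally I would bound the largest non-representable integer. If $q\leq -1$ and $0\leq r\leq a_1-1$, then
$$
N = q\,a_1+r\,a_2 \;\leq\; -a_1+(a_1-1)a_2 \;=\; a_1a_2-a_1-a_2 \;=\; g(a_1,a_2),
$$
with equality exactly at $(q,r)=(-1,a_1-1)$. Consequently, any $N>g(a_1,a_2)$ has canonical $q\geq 0$, and setting $p_1=q$, $p_2=r$ gives the desired decomposition. The only mildly delicate point is the optimality of the canonical $r$ among all representatives; this is the one step that uses coprimality in an essential way, and once uniqueness of $r\in[0,a_1-1]$ is established the rest is bookkeeping.
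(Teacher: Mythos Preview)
Your argument is correct and is the standard proof of Sylvester's formula for the two-coin Frobenius number: write $N$ in the canonical form $N=qa_1+ra_2$ with $0\leq r\leq a_1-1$, observe that this $r$ minimizes the $a_2$-coefficient among all integer representations, so $N$ is representable in $\mathbb{N}_0$ precisely when $q\geq0$, and the worst non-representable case $(q,r)=(-1,a_1-1)$ gives exactly $a_1a_2-a_1-a_2$.

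There is nothing to compare against in the paper itself: the authors do not supply a proof of this lemma but simply cite it as Theorem~2.1.3 of Ram\'{\i}rez Alfons\'{\i}n's monograph on the Diophantine Frobenius problem. Your self-contained argument is therefore a genuine addition rather than a duplication. One minor remark: the paper's statement says ``non-negative relatively prime integers,'' which literally allows $a_1=0$ or $a_1=1$; your proof tacitly assumes $a_1\geq1$ (you reduce modulo $a_1$), so if you want to be fully rigorous you might note that the degenerate cases are trivial (if either $a_i=1$ then $g(a_1,a_2)=-1$ and every $N\geq0$ is representable).
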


\begin{proof}[Proof of Theorem \ref{EC}]
	Let  $\varepsilon>0$ and $k_\varepsilon\geq 1$, $k_\varepsilon$ as in Lemma \ref{GCD}. Then $X$ is divided into $k_\varepsilon$ equivalence classes for $\thicksim_\varepsilon$.  In fact, for any $x,z\in X$ with $x\thicksim_\varepsilon z$, let us suppose that
	$$
	(x_0=x,x_1,x_2,\cdots,x_{k_\varepsilon},x_{k_\varepsilon+1},x_{k_\varepsilon+2},\cdots,x_{2k_\varepsilon},\cdots,x_{nk_\varepsilon}=z)
	$$ 
	is a $(w,\varepsilon)$-chain from $x$ to $z$ of length $nk_\varepsilon$. It is clear that
	$$
	x_0\thicksim_\varepsilon x_{k_\varepsilon}\thicksim_\varepsilon x_{2k_\varepsilon}\thicksim_\varepsilon \cdots\thicksim_\varepsilon x_{nk_\varepsilon};
	$$
	$$x_1\thicksim_\varepsilon x_{k_\varepsilon+1}\thicksim_\varepsilon x_{2k_\varepsilon+1}\thicksim_\varepsilon \cdots\thicksim_\varepsilon x_{(n-1)k_\varepsilon+1};$$
	$$\vdots$$
	$$x_{k_\varepsilon-1}\thicksim_\varepsilon x_{2k_\varepsilon-1}\thicksim_\varepsilon x_{3k_\varepsilon-1}\thicksim_\varepsilon \cdots\thicksim_\varepsilon x_{nk_\varepsilon-1}.$$
	
	Denote these equivalence classes as $[x_0]_{\thicksim_{\varepsilon}},[x_1]_{\thicksim_{\varepsilon}},\cdots,[x_{k_\varepsilon-1}]_{\thicksim_{\varepsilon}}$, respectively. For any $x\in X$, we deduce that $x\in [x_i]_{\thicksim_{\varepsilon}}$ for some $0\leq i \leq k_\varepsilon-1$ by $G$ is chain transitive. And we have $x_i\nsim_\varepsilon x_{i+1}$ for each $i=0,\cdots,k_\varepsilon-1$ since $(x_i,x_{i+1})$ is an $\varepsilon$-chain of length $1$.  Consequently, $X$ is divided into $k_\varepsilon$ equivalence classes. 
	
	\hspace{4mm}
	Obviously, $G$ cyclically permutes $k_\varepsilon$ equivalence classes, that is, 
	$$
	[f_j(x_i)]_{\thicksim_{\varepsilon}}=[x_{i+1\!\!\!\mod k_\varepsilon}]_{\thicksim_{\varepsilon}}
	$$ 
	for all $j=0,\cdots,m-1$ and $i=0,\cdots,k_\varepsilon-1$. Moreover, every equivalence class is invariant under $G^{k_\varepsilon}$, that is, $f_w([x_i]_{\thicksim_{\varepsilon}})\subset [x_i]_{\thicksim_{\varepsilon}}$ for all $i=0,\cdots,k_\varepsilon-1$ and $w\in F^+_m$ with $|w|=k_\varepsilon$. Since $[f_w(x_i)]_{\thicksim_{\varepsilon}}= [x_i]_{\thicksim_{\varepsilon}}$, we only prove $ f_w([x_i]_{\thicksim_{\varepsilon}})\subset [f_w(x_i)]_{\thicksim_{\varepsilon}}$. For any $x\in [x_i]_{\thicksim_{\varepsilon}}$, we have $f_w(x) \thicksim_{\varepsilon} f_w(x_i)$ since $x\thicksim_{\varepsilon} f_w(x)$, $x \thicksim_{\varepsilon} x_i$ and $x_i\thicksim_{\varepsilon} f_w(x_i)$.  
	
	\hspace{4mm}
	The quantity $k_\varepsilon$ is nondecreasing as $\varepsilon\to 0$, and in fact $k_{\varepsilon_2}$ divides $k_{\varepsilon_1}$ if $\varepsilon_1\leq\varepsilon_2$, since an $\varepsilon_1$-chain is an $\varepsilon_2$-chain. Either $k_\varepsilon$ stabilizes at some $k$, or it grows without bound. We consider the three cases separately.

    \hspace{4mm}
    Case 1: $k_\varepsilon$ stabilizes at $k=1$. Then there is only one $\thicksim$ equivalence class, and $G$ is chain mixing. Indeed, for any $\varepsilon>0$ and any $x\in X$, $T_\varepsilon(x)$ is closed under addition. Since $gcd(T_\varepsilon(x))=1$, there exist $r_1,s_1\in T_\varepsilon(x)$ such that $gcd(r_1,s_1)=1$ by Lemma \ref{NT}. This implies that there exist $(w',\varepsilon)$-chain and $(w'',\varepsilon)$-chain from $x$ to itself of lengths $|w'|=r_1$ and $|w''|=s_1$, respactively. We can get an $\varepsilon$-chain from $x$ to itself of length $N$ for any $N>g(r_1,s_1)$ by Lemma \ref{AJ}. By compactness, there is a $p\in\mathbb{N}$ such that $\bigcup_{i=1}^pB(x_i,\frac{\varepsilon}{2})=X$, let the length of an $\frac{\varepsilon}{2}$-chain from $x_i$ to $x_j$ be $M_{ij}$, and $M=\max_{1\leq i,j\leq p}M_{ij}$, then for between any two points in $X$ there is an $\varepsilon$-chain of length less than or equal to $M$. Hence, for any $y\in X$, any $n>g(r_1,s_1)+M$, there is a $(w,\varepsilon)$-chain of length $|w|=n$ from $x$ to $y$. Therefore, $G$ is chain mixing.

    \hspace{4mm}
    Case 2: $k=k_{\varepsilon}$ for sufficiently small $\varepsilon$. Then the equivalence relation $\thicksim$ is the same as $\thicksim_{\varepsilon}$. Thus there are $k$ equivalence classes, $G^k$ cycles among the classes periodically, and each class is invariant under $G^k$. An argument similar to that for Case 1 show that $([x_i]_{\thicksim},G^k)$ is chain mixing for each $i=0,\cdots,k-1$. By uniform continuity, pick $\varepsilon'<\varepsilon/k$ small enough that for any $(w,\varepsilon')$-chain of length $|w|=k$ for $G$, denotes $(x_0,\cdots,x_n)$, we have $d(f_{\overline{w}} (x_0),x_k)<\varepsilon$. Notice that $k_{\varepsilon'}=k$ as $\varepsilon'<\varepsilon$, then  $gcd(T_{\varepsilon'}(x))=k$ for any $x\in [x_i]_{\thicksim}$. By Lemma \ref{NT}, there exist $r_2,s_2\in T_{\varepsilon'}(x)$ such that $gcd(r_2,s_2)=k$. This implies that there are $(w',\varepsilon')$-chain and $(w'',\varepsilon')$-chain from $x$ to itself of lengths $|w'|=r_2$ and $|w''|=s_2$ for $G$, respactively. Put $r_2=ak$ and $s_2=bk$, then $gcd(a,b)=1$.  By Lemma \ref {AJ}, for any $N$ with $N>g(a,b):=ab-a-b$, there are $p,q\in\mathbb{N}_0$ such that $N=pa+qb$ , hence there is an $\varepsilon'$-chain from $x$ to itself of length $kN$. Since $[x_i]_{\thicksim}$ is closed, there is $p'\geq 1$ suth that $\bigcup_{j=1}^{p'}B(z_j,\frac{\varepsilon'}{2})\supset [x_i]_{\thicksim}$ where $z_j\in [x_i]_{\thicksim}$ for all $j=1,\cdots,p'$. For $1\leq j,r\leq p'$, there is a $(w_{jr},\varepsilon')$-chain from $z_j$ to $z_r$ of length $|w_{jr}|=kM_{j,r}$ for $G$. Let $M=\max_{1\leq j,r\leq p'}M_{j,r}$, then between any two points in $[x_i]_{\thicksim}$ there is an $\varepsilon'$-chain for $G$ of length equal to $ck$ for some $1\leq c\leq M$. We chaim that for any $n>g(a,b)+M$ and any $y\in [x_i]_{\thicksim}$, there is an $\varepsilon$-chain from $x$ to $y$ of the length $n$ for $G^k$. Indeed, notice that there is a $(w''',\varepsilon')$-chain from $x$ to $y$ of length $ck$ with $1\leq c\leq M$ for $G$. Since $n-c>g(a,b)$, there are $p_1,q_1\in\mathbb{N}_0$ such that $n-c=p_1a+q_1b$ by Lemma \ref{AJ}. Then from the above structure, a $(w,\varepsilon')$-chain for $G$ of length $nk$, $(x_0=x,\cdots,x_{nk}=y)$, is naturally formed, where
    $$
    \begin{aligned}
	    w=&\underbrace{w'\cdots w'}_{p_1}\underbrace{w''\cdots w''}_{q_1}w'''\\
	    :=&i_0\cdots i_{k-1}i_k\cdots i_{2k-1}i_{2k}\cdots i_{nk-1}.
    \end{aligned}
    $$
    Then, we have 
    $$
    d\left (f_{\overline{i_{jk}\cdots i_{{(j+1)}k-1}}}(x_{jk}),x_{{(j+1)}k}\right )<\varepsilon
    $$
    for all $j=0,\cdots,n-1$. Moerover, there is $u=u_0\cdots u_{n-1}\in F^+_{m^k}$ such that $(f)_{u_j}=f_{\overline{i_{jk}\cdots i_{{(j+1)}k-1}}}$ for all $j=0,\cdots ,n-1$, this means that  $(x_0,x_k,x_{2k},\cdots,x_{nk})$ is a $(u,\varepsilon)$-chain of length $|u|=n$ for $G^k$. 
    Consequently, $([x_i]_{\thicksim},G^k)$ is chain mixing. 
            
    \hspace{4mm}
    Case 3: $k_\varepsilon$ grows without bound as $\varepsilon$ decreasing to $0$. Then the period of $G$'s cycling goes to infinity as $\varepsilon$ shrinks to $0$. Let $\widetilde{K}_\varepsilon=X/\thicksim_{\varepsilon}=\{[x_0]_{\thicksim_{\varepsilon}},\cdots,[x_{k_\varepsilon-1}]_{\thicksim_{\varepsilon}}\}$ and $\widetilde{K}=X/\thicksim$ be the quotient spaces with the quotient topology. For $j=0,\cdots,m-1$, we define the induced map on $\widetilde{K}_\varepsilon$  to be given by  $$\widetilde{f}_{j}:\widetilde{K}_\varepsilon\to\widetilde{K}_\varepsilon,\; [x_i]_{\thicksim_{\varepsilon}}\mapsto [f_j(x_i)]_{\thicksim_{\varepsilon}}$$ for all $i=0,\cdots,k_\varepsilon-1$. Since 
    $
    \widetilde{f}_{j}([x_i]_{\thicksim_{\varepsilon}})= [f_j(x_{i})]_{\thicksim_{\varepsilon}}=[x_{i+1\!\!\!\mod k_\varepsilon}]_{\thicksim_{\varepsilon}}
    $ 
    for all $j=0,\cdots,m-1$ and $i=0,\cdots,k_\varepsilon-1$, then every induced map $\widetilde{f}_{j}$ is the same, denoted as $\widetilde{f}_\varepsilon:\widetilde{K}_\varepsilon\to\widetilde{K}_\varepsilon$, $[x_i]_{\thicksim_{\varepsilon}}\mapsto[x_{i+1\!\!\!\mod k_\varepsilon}]_{\thicksim_{\varepsilon}}$ for all $i=0,\cdots,k_\varepsilon-1$. Similar to Theorem 6 of \cite{RD}, we deduce that $(\widetilde{K},\widetilde{f})$ is topological conjugate to an adding machine map $f_J$. Therefore, $G$ factors onto an adding machine map $f_J$.  
\end{proof}

\begin{cor}\label{5.7}
	Let $X$ be connected and $G=\{f_0,\cdots,f_{m-1}\}$ where $f_{i}:X\to X$ is continuous for all $0\leq i\leq m-1$. Then the following are euqivalent:
	\begin{enumerate}[(1)]
		\item $G$ is chain recurrence;
		\item $G$ is chain transitive;
		\item $G$ is totally chain transitive;
		\item $G$ is chain mixing.
	\end{enumerate}
\end{cor}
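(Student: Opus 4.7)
The plan is to close the loop of implications $(4) \Rightarrow (3) \Rightarrow (2) \Rightarrow (1) \Rightarrow (2) \Rightarrow (4)$, which together yield all four equivalences. The implication $(4) \Rightarrow (3)$ is immediate from Proposition~\ref{4.8}: if $G$ is chain mixing then $G^k$ is chain mixing, hence in particular chain transitive, for every $k \geq 1$. The implication $(3) \Rightarrow (2)$ is obtained by specializing to $k=1$, and $(2) \Rightarrow (1)$ is immediate from the definitions by taking $y=x$ in the chain transitivity condition. Note that none of these three implications uses the connectedness of $X$.

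The first nontrivial step is $(1) \Rightarrow (2)$. I would fix $\varepsilon > 0$ and introduce the relation $x \sim_\varepsilon y$ meaning that there exist $w, w' \in F_m^+$, a $(w,\varepsilon)$-chain from $x$ to $y$, and a $(w',\varepsilon)$-chain from $y$ to $x$, all of whose chain inequalities $d(f_{i_j}(x_j), x_{j+1}) < \varepsilon$ are \emph{strict}. Chain recurrence of $G$ at scale $\varepsilon/2$ yields a chain from $x$ to itself with all steps at most $\varepsilon/2 < \varepsilon$, so $\sim_\varepsilon$ is reflexive; symmetry is clear and transitivity follows by concatenation. The crucial point is that every $\sim_\varepsilon$-equivalence class is open: given $x \sim_\varepsilon y$ with strict chains, the slack $\varepsilon - d(f_{i_0}(x),x_1) > 0$ at the first step, combined with the continuity of $f_{i_0}$ at $x$, allows one to replace the initial point $x$ of the chain from $x$ to $y$ by any sufficiently close $x'$ without violating strictness; the same slack at the last step of the chain from $y$ to $x$ lets one replace the terminal point by $x'$. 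The classes are therefore clopen, and the connectedness of $X$ forces there to be exactly one class. Thus for every $x, y \in X$ there is a $(w,\varepsilon)$-chain from $x$ to $y$, and since $\varepsilon$ was arbitrary, $G$ is chain transitive.

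The remaining implication $(2) \Rightarrow (4)$ is handled by applying Theorem~\ref{EC}. In its case~(1) the space $X$ is a disjoint union of $k \geq 1$ clopen equivalence classes; but in a connected space the only clopen subsets are $\emptyset$ and $X$, so necessarily $k=1$, and $G = G^k$ is chain mixing on the single class $X$. In its case~(2) there would exist a continuous surjection from $X$ onto an adding machine $X_J$, which is a Cantor set containing at least two points (because each $j_i \geq 2$); the continuous image of a connected space is connected, while the only connected subsets of $X_J$ are singletons, so such a surjection cannot exist. Hence case~(2) is ruled out and $G$ falls into case~(1) with $k=1$, i.e.\ $G$ is chain mixing.

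The hard part will be the openness argument in $(1) \Rightarrow (2)$: chains in this paper are defined with the non-strict inequality $d(f_{i_j}(x_j), x_{j+1}) \leq \delta$, and a direct perturbation of the endpoints generally only gives $d \leq \delta + \eta$, which is too weak. My remedy is to build strictness into the definition of $\sim_\varepsilon$ and to invoke chain recurrence at the smaller scale $\varepsilon/2$ to preserve reflexivity; this provides exactly the slack that, combined with the uniform continuity of the finitely many generators $f_0, \ldots, f_{m-1}$, makes the equivalence classes open and activates the connectedness hypothesis.
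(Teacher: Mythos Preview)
Your proof is correct and follows the same overall strategy as the paper: the easy implications $(4)\Rightarrow(3)\Rightarrow(2)\Rightarrow(1)$, then $(1)\Rightarrow(2)$ via an open two-way $\varepsilon$-chain equivalence relation combined with connectedness, and finally $(2)\Rightarrow(4)$ by invoking Theorem~\ref{EC}. The only difference is the technical device in $(1)\Rightarrow(2)$: the paper keeps the non-strict chain definition and, to perturb an endpoint $y$ to a nearby $y'$, \emph{appends} an $\varepsilon/2$-chain loop at $y$ (supplied by chain recurrence) to manufacture the needed slack; you instead build strictness into $\sim_\varepsilon$ from the outset and use chain recurrence at scale $\varepsilon/2$ only to secure reflexivity, so the slack is already present when you perturb. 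Both tricks are standard and equally effective; your version is arguably a bit cleaner, while the paper's version has the minor advantage of not altering the chain convention used elsewhere. Your treatment of $(2)\Rightarrow(4)$ is also more explicit than the paper's (which simply asserts the conclusion from Theorem~\ref{EC}): you spell out why connectedness forces $k=1$ in case~(1) and rules out case~(2) via the total disconnectedness of the adding machine.
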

\begin{proof}
	It follows from the proof of Corollary 14 of \cite{RD}. Clearly, $(4)\Rightarrow (3)\Rightarrow(2)\Rightarrow(1)$. By Theorem \ref{EC}, $X$ is connected and $G$ is chain transitive imply that $G$ is chain mixing. So it enough to show that chain recurrence implies chain tansitivity. Assume that $G$ is chain recurrence and $\varepsilon>0$. We say that $x$ and $y$ are $\varepsilon$-chain equivalent if there are $(w',\varepsilon)$-chain from $x$ to $y$ and $(w'',\varepsilon)$-chain from $y$ to $x$ for some $w',w''\in F_m^+$. This is an equivalent relation as $G$ is chain recurrence. By connectivity of $X$, so it suffices to show that this is an open equivalent relation. Consider that $x$ and $y$ are $\varepsilon$-chain equivalent. Choose $\delta\leq \varepsilon/2$ such that $d(y,y')<\delta$ implies $d(f_i(y),f_i(y'))<\varepsilon/2$ for all $i=0,\cdots,m-1$. It suffices to show that $x$ is $\varepsilon$-chain equivalent to an arbitrary $y'\in B(y,\delta)$. Let $(x_0=x,\cdots,x_n=y)$ be a $(w',\varepsilon)$-chain for some $w'\in F_m^+$ from $x$ to $y$, and $(y_0=y,\cdots,y_m=y)$ be a $(w'',\varepsilon/2)$-chain for some $w''\in F_m^+$ from $y$ to itself. Then $$(x_0=x,\cdots,x_n=y,y_1,\cdots,y_{m-1},y')$$ is a $(w'w'',\varepsilon)$-chain from $x$ to $y'$. Similarly, let $(z_0=y,\cdots,z_r=x)$ be a $(w''',\varepsilon)$-chain for some $w'''\in F_m^+$. Then $$(y',y_1,\cdots,y_m=y,z_1,\cdots,z_r=x)$$ is a $(w''w''',\varepsilon)$-chain from $y'$ to $x$. This yields that $x$ and $y'$ are $\varepsilon$-chain equivalent.
\end{proof}
\begin{remark}
	If $X$ be connected and $G=\{f\}$, by Corollary \ref{5.7}, this gives a generalization of Corollary 14 of \cite{RD}. 
\end{remark}

\textbf{Acknowledgements}
The  work was supported by National Natural Science Foundation of China (grant no.11771149).

\begin{bibdiv}
\begin{biblist}

\bib{AKK}{article}{
	author={Adler, R. L.},
	author={Konheim, A. G.},
	author={McAndrew, M. H.},
	TITLE = {Topological entropy},
	JOURNAL = {Trans. Amer. Math. Soc.},
	FJOURNAL = {Transactions of the American Mathematical Society},
	VOLUME = {114},
	YEAR = {1965},
	PAGES = {309--319}
}
\bib{MR1219737}{book}{
	author={Akin, E.},
	title={The general topology of dynamical systems},
	series={Graduate Studies in Mathematics},
	volume={1},
	publisher={American Mathematical Society, Providence, RI},
	date={1993},}
\bib{ADV}{article}{
	author={Anosov, D. V.},
	author={Sina\u{\i}, Ja. G.},
	TITLE = {Certain smooth ergodic systems},
	JOURNAL = {Uspehi Mat. Nauk},
	FJOURNAL = {Akademiya Nauk SSSR i Moskovskoe Matematicheskoe Obshchestvo.
		Uspekhi Matematicheskikh Nauk},
	VOLUME = {22},
	YEAR = {1967},
	NUMBER = {5 (137)},
	PAGES = {107--172},
}
\bib{BZ}{article}{
  title={Shadowing and average shadowing properties for iterated function systems},
  author={Bahabadi, A.Z.},
  journal={Georgian Math. J.},
  volume={22},
  number={2},
  year={2015},
}
\bib{BM}{article}{
	title={Pseudo-orbits and topological entropy},
	author={Barge, M.},
	author={Swanson, R.}
	journal={Proceedings of the American Mathematical Society},
	volume={109},
	number={2},
	pages={559--566},
	year={1990}	
}
\bib{BAC}{article}{
	AUTHOR = {Bi\'{s}, A.},
	TITLE = {Entropies of a semigroup of maps},
	JOURNAL = {Discrete Contin. Dyn. Syst.},
	FJOURNAL = {Discrete and Continuous Dynamical Systems. Series A},
	VOLUME = {11},
	YEAR = {2004},
	NUMBER = {2-3},
	PAGES = {639--648},
}

\bib{BL}{article}{
	author={Block, L.},
	author={Keesling, J.},
	TITLE = {A characterization of adding machine maps},
	JOURNAL = {Topology Appl.},
	FJOURNAL = {Topology and its Applications},
	VOLUME = {140},
	YEAR = {2004},
	NUMBER = {2-3},
	PAGES = {151--161},
}

\bib{BR}{article}{
	AUTHOR = {Bowen, R.},
	TITLE = {Entropy for group endomorphisms and homogeneous spaces},
	JOURNAL = {Trans. Amer. Math. Soc.},
	FJOURNAL = {Transactions of the American Mathematical Society},
	VOLUME = {153},
	YEAR = {1971},
	PAGES = {401--414}
}

\bib{BAA}{article}{
	title={Topological entropy of free semigroup actions and skew-product transformations},
	author={Bufetov, A.},
	journal={J. Dyn. Control Syst.},
	volume={5},
	number={1},
	pages={137--143},
	year={1999},
	publisher={Springer}
}
\bib{CMR}{article}{
	author={Carvalho, M.},
	author={Rodrigues, F.},
	author={Varandas, P.},
	TITLE = {A variational principle for free semigroup actions},
	JOURNAL = {Adv. Math.},
	FJOURNAL = {Advances in Mathematics},
	VOLUME = {334},
	YEAR = {2018},
	PAGES = {450--487},
}

\bib{MR3592853}{article}{
	author={Carvalho, M.},
	author={Rodrigues, F.},
	author={Varandas, P.},
	title={Semigroup actions of expanding maps},
	journal={J. Stat. Phys.},
	volume={166},
	date={2017},
	number={1},
	pages={114--136},}

\bib{MR3784991}{article}{
	author={Carvalho, M.},
	author={Rodrigues, F.},
	author={Varandas, P.},
	title={Quantitative recurrence for free semigroup actions},
	journal={Nonlinearity},
	volume={31},
	date={2018},
	number={3},
	pages={864--886},}
\bib{FK}{book}{
	AUTHOR = {Falconer, K.},
	TITLE = {Fractal geometry},
	EDITION = {Second},
	NOTE = {Mathematical foundations and applications},
	PUBLISHER = {John Wiley \& Sons, Inc., Hoboken, NJ},
	YEAR = {2003},
}
\bib{MR926526}{article} {
	author={Ghys, \'{E}.},
	author={Langevin, R.},
	author={Walczak, P.},
	TITLE = {Entropie g\'{e}om\'{e}trique des feuilletages},
	JOURNAL = {Acta Math.},
	FJOURNAL = {Acta Mathematica},
	VOLUME = {160},
	YEAR = {1988},
	NUMBER = {1-2},
	PAGES = {105--142},}

\bib{MR3842255}{article}{
	author={Hui, H.},
	author={Ma, D.},
	title={Some dynamical properties for free semigroup actions},
	journal={Stoch. Dyn.},
	volume={18},
	date={2018},
	number={4},}
\bib{MR1336706}{article}{
	author={Hurley, M.},
	title={On topological entropy of maps},
	journal={Ergodic Theory Dynam. Systems},
	volume={15},
	date={1995},
	number={3},
	pages={557--568},}
\bib{JM}{article}{
	author={Ju, Y.},
	author={Ma, D.},
	author={Wang, Y.},
	TITLE = {Topological entropy of free semigroup actions for noncompact sets},
	JOURNAL = {Discrete Contin. Dyn. Syst.},
	FJOURNAL = {Discrete and Continuous Dynamical Systems. Series A},
	VOLUME = {39},
	YEAR = {2019},
	NUMBER = {2},
	PAGES = {995--1017}
}
\bib{LM}{article}{
	author={X. Lin},
	author={D. Ma},
	author={Y. Wang},
	TITLE = {On the measure-theoretic entropy and topological pressure of
		free semigroup actions},
	JOURNAL = {Ergodic Theory Dynam. Systems},
	FJOURNAL = {Ergodic Theory and Dynamical Systems},
	VOLUME = {38},
	YEAR = {2018},
	NUMBER = {2},
	PAGES = {686--716},
}
\bib{MM}{article}{
	title={Remark on the definition of topological entropy},
	author={Misiurewicz, M.},
	journal={Dynamical systems and partial differential equations (Caracas, 1984), Univ. Simon Bolivar, Caracas},
	pages={65--67},
	year={1986}
}
\bib{AJ}{book}{
	author={Ram\'{\i}rez Alfons\'{\i}n, J. L.},
	title={The Diophantine Frobenius problem},
	series={Oxford Lecture Series in Mathematics and its Applications},
	volume={30},
	publisher={Oxford University Press, Oxford},
	date={2005},
}

\bib{RD}{article}{
	title={Chain recurrence rates and topological entropy},
	author={Richeson, D.},
	author={Wiseman, J.}
	journal={Topology Appl.},
	volume={156},
	number={2},
	pages={251--261},
	year={2008},
	publisher={Elsevier}
}

\bib{WX}{article}{
title={The Chain Properties and Average Shadowing Property of Iterated Function Systems },
  author={X. Wu, L. Wang},
  journal={Qual. Theory Dyn. Syst.},
  volume={17},
  year={2018},
}

\bib{MR3539733}{article}{
	author={Yan, K.},
	author={Zeng, F.},
	title={Topological entropy, pseudo-orbits and uniform spaces},
	journal={Topology Appl.},
	volume={210},
	date={2016},
	pages={168--182},}

\bib{MR4200965}{article}{
	author={Zhu, Li},
	author={Ma, Dongkui},
	title={The upper capacity topological entropy of free semigroup actions
		for certain non-compact sets},
	journal={J. Stat. Phys.},
	volume={182},
	date={2021},
	number={1},}

\end{biblist}
\end{bibdiv}

\end{document}